\documentclass[ijoc,sglanonrev]{informs4}
\usepackage{eqndefns-left} 
\RequirePackage{tgtermes}
\RequirePackage{newtxtext}
\RequirePackage{newtxmath}
\RequirePackage{bm}
\RequirePackage{endnotes}
\usepackage{amsmath}
\usepackage{amssymb}

\usepackage{xcolor}

\OneAndAHalfSpacedXII 

\usepackage{algorithm}
\usepackage{algpseudocode}
\usepackage{tikz}
\usepackage{tikz}
\usetikzlibrary{arrows.meta} 
\usetikzlibrary{positioning} 

\usepackage{natbib}
 \bibpunct[, ]{(}{)}{,}{a}{}{,}%
 \def\bibfont{\small}%
\EquationsNumberedThrough    

\TheoremsNumberedThrough     
\ECRepeatTheorems  %

\MANUSCRIPTNO{}

\begin{document}


\RUNAUTHOR{Adegbindin, Dietrich, and Pender}

\RUNTITLE{LP-State Steering for MIP Performance Variability}

\TITLE{Mitigating Performance Variability in Mixed-Integer Programming Solvers via LP State Steering}

\ARTICLEAUTHORS{%
\AUTHOR{Mourchid Adegbindin, Brenda Dietrich, Jamol Pender}
\AFF{School of Operations Research and Information Engineering,
Cornell University, Ithaca, NY, 14853,\\ \EMAIL{ma757@cornell.edu}, \EMAIL{bld34@cornell.edu}, \EMAIL{jjp274@cornell.edu}}

} 

\ABSTRACT{%
Modern mixed-integer programming (MIP) solvers repeatedly solve linear programming (LP) relaxations, but the LP state transferred from the simplex algorithm to the branch-and-bound tree is typically treated as an artifact of perturbation, presolve history, and numerical trajectory. This is harmless in pure linear 
programming, where alternative optimal bases certify the same bound. In MIP, however, those bases can expose distinct fractional structures, active rows, reduced costs, branching candidates, and reoptimization states. The inherited LP state is therefore an initial condition for the integer search. We exploit this initial condition to mitigate computational performance variability. The 
proposed framework generates alternative LP states on or near degenerate optimal faces, evaluates them within an auxiliary solver environment, and either selects a single state or executes a portfolio of candidate states in parallel. A handcrafted synthetic instance isolates the relationship between the initial LP state, the fractional profile, the branching trajectory, 
and the resulting node-count distribution: root-state steering reduces the weighted variability score~(WVS) by 67\% over 1,000 paired seeds and produces no node-count regressions relative to default SCIP. On a 20-instance pilot set, high-quality LP states are consistently present among generated candidates and frequently appear within the selector's top five options, which justifies 
the use of small-portfolio policies. On a separate 130-instance MIPLIB 2017 validation set with up to 50 paired seeds per instance, the top-five policy improves solve completion from 3,792 to 3,894 runs, reduces the shifted geometric mean of the recorded node count from 2,574.7 to 1,198.0, and lowers 
the median WVS by 9\% on instances reliably solved by the baseline. These 
results demonstrate that the branch-and-bound phase inherits a specific LP state 
rather than merely an LP bound, and that the active management of this state 
systematically mitigates computational performance variability.
}%




\KEYWORDS{mixed-integer programming; performance variability; branch-and-bound; simplex basis; solver reliability; SCIP}
 
\maketitle


\section{Introduction}
\label{sec:introduction}

\subsection{Motivation: Performance Variability and LP-State Inheritance}
\label{sec:intro_motivation}

Modern mixed-integer programming (MIP) solvers are highly sophisticated engines for general-purpose optimization. Yet their performance can vary widely under minor perturbations, such as changes to the random seed, row or column ordering, presolve choices, or the computing environment. An instance that solves quickly under one configuration may require an orders-of-magnitude larger branch-and-bound tree under another, even though the mathematical  formulation and optimal objective value are unchanged.

This variability complicates solver profiling, makes empirical comparisons fragile, and introduces operational risk when optimization models are embedded in automated, time-limited decision pipelines. In such settings, tail behavior matters alongside median performance because a small number of delayed solves can determine whether a computation finishes within its operational budget.

A common interpretation is that this variance is an unavoidable consequence of combinatorial search. That view is incomplete. Branch-and-bound is guided by a sequence of LP relaxations, and degenerate relaxations can admit multiple optimal simplex bases. These bases certify the same LP bound, but they need not be equivalent for the subsequent MIP search. They can expose different fractional solutions, active rows, reduced costs, branching candidates, and reoptimization states. Thus, identical LP bounds can lead to different downstream search behavior depending on the state returned by simplex.

This observation is the starting point of this work. If multiple LP-optimal or near-optimal states are available, and if the inherited state can alter the subsequent search trajectory, then the LP state transferred from simplex to branch-and-bound is an algorithmic degree of freedom. LP optimality determines the bound, but it does not uniquely determine the state from which the integer search proceeds.

\subsection{Central Thesis and Contributions}
\label{sec:intro_contributions}

This paper treats LP-state selection as a solver-internal control variable for performance variability. In degenerate relaxations, multiple LP-optimal or near-optimal states can preserve essentially the same relaxation bound while presenting different branch-relevant information to the subsequent MIP search. We develop a basis-steering framework that generates alternative LP states using reduced-cost and LP-state information, evaluates them outside the live branch-and-bound trajectory, and commits a selected state only after compatibility checks. The solver's native branching rule is left unchanged. Depending on the resource budget, the framework can commit a single selected state or evaluate a small selected portfolio concurrently.

The contributions are fourfold. First, we give a geometric account of why LP-equivalent states can be search-non-equivalent and isolate this effect on a handcrafted synthetic instance. Second, we develop a reduced-cost-based state generator together with a state-consistent handoff that allows a selected LP state to be carried into the subsequent branch-and-bound search. Third, on a 20-instance pilot set, a best-of-\(K\) audit separates candidate availability from candidate selection and shows that useful states are often present among the generated candidates; a frozen rank-based selector and small selected portfolios recover part of this opportunity. Fourth, we evaluate the frozen policies on a separate 130-instance MIPLIB 2017 validation set, where the top-five policy increases the number of solved runs by 102, reduces the shifted geometric mean of the recorded node count by more than half, and lowers median WVS by 9\% on baseline-tractable instances.

\subsection{Relationship to Prior Work}
\label{sec:intro_prior_work}

Performance variability in mixed-integer programming has been documented for
many years. Changes in formulation, row or column ordering, randomization, and
internal solver paths can produce large differences in computational effort even
when the underlying optimization problem is unchanged
\citep{danna2008, KochEtAl2011_MIPLIB2010, LodiTramontani2013, halSortModel}.
Diversification methods address this sensitivity through restarts, portfolios,
concurrent execution, or sampling of alternative solver trajectories
\citep{gomes2000}. Learning-based methods instead use problem features or solver
state to configure algorithms or guide decisions such as branching
\citep{bengio2021machine}. The method studied here does not replace
the branching rule; it changes the LP state presented to that rule.

Degeneracy-aware manipulation of LP optima has also been used within MIP
solvers. Methods described by Achterberg construct alternative
optimal LP solutions using reduced-cost information and modified auxiliary
objectives, with applications including fractionality and cut selection
\citep{achterberg2013lp}. Related integrality-aware simplex procedures exploit
zero reduced costs to move along an optimal face toward LP solutions with
properties considered favorable for subsequent MIP processing. Miltenberger
\citep{miltenberger2016} develops this idea as LP solution polishing:
additional zero-reduced-cost pivots alter an optimal basis to decrease or
increase the fractionality of integer variables. These experiments show that
the choice of LP optimum can affect root behavior and subsequent node counts,
although no reduction in performance variability was observed.

The closest work on variability itself is Fischetti et
al.~\citep{fischetti2016random}. Their \texttt{ksample} procedure processes
several diversified root nodes, collects the cutting planes and feasible
solutions generated by those runs, and supplies the aggregated information to
a subsequent branch-and-cut run. In their experimental implementation,
diversification is induced through the CPLEX random seed, which affects several
solver components in addition to LP degeneracy. The method substantially
improves computational effort, but the authors report mixed results for
stability of the complete enumeration and identify stabilization of the
branching tree as an open problem.

The present work treats alternative LP states as candidates for explicit
selection. Candidate states are generated and evaluated outside the live tree,
and the selected state itself is handed to the subsequent branch-and-bound
search rather than using alternative root runs only as sources of cuts or
primal solutions. The empirical study separates candidate availability from
policy recovery and then evaluates, over paired runs, how the resulting
state-selection policies affect variability in the subsequent branch-and-bound
search.

\subsection{Paper Organization}
\label{sec:intro_organization}

The paper is organized as follows. Section~\ref{sec:geometry} develops the geometry of degenerate LP states. Section~\ref{sec:framework} presents the basis-steering framework. Section~\ref{sec:experiments} describes the experimental design, endpoints, the WVS metric, the synthetic instance, the pilot set, the validation protocol, and the resource accounting. Section~\ref{sec:results} reports the computational results, including the synthetic experiment, the 20-instance best-of-\(K\) pilot audit, selector and top-five ablations, and the 130-instance validation experiment. Section~\ref{sec:discussion} discusses solver-design implications, threats to validity, and limitations.


\section{Geometry of Degenerate LP Optimal Faces}
\label{sec:geometry}

\subsection{Reduced Costs and Objective Invariance}
\label{sec:objective_invariance}

Consider the linear programming relaxation solved at a node of the branch-and-bound tree:
\begin{equation}
    z^*
    =
    \min
    \left\{
        c^T x
        \;\middle|\;
        Ax=b,\;
        \ell \leq x \leq u
    \right\},
\end{equation}
where $A \in \mathbb{R}^{m \times n}$, $c \in \mathbb{R}^{n}$,
$\ell \in (\mathbb{R}\cup\{-\infty\})^n$, and $u \in (\mathbb{R}\cup\{+\infty\})^n$. The primal optimal solution set is the face
\begin{equation}
    \mathcal{F}^*
    =
    \left\{
        x \in \mathbb{R}^{n}
        \;\middle|\;
        Ax=b,\;
        \ell \leq x \leq u,\;
        c^T x = z^*
    \right\}.
\end{equation}

A simplex solve certifies optimality once it reaches an optimal basis. It is not designed to compare LP-optimal bases according to their downstream effect on integer search. When this basis choice is unique, or when alternative bases expose the same solver-facing information, the distinction is immaterial. In degenerate relaxations, however, the same LP objective value may be represented by multiple optimal bases and, in some cases, by multiple primal vertices on \(\mathcal{F}^*\). We refer to the basis, primal solution, reduced costs, active rows, and branching-visible LP information collectively as the LP state. Such states are equivalent for the continuous relaxation, but they may differ in fractional-variable structure, active-row patterns, basis conditioning, branching candidates, and reoptimization behavior after branching.

Let \(B\in\mathbb{R}^{m\times m}\) denote the basis matrix associated with an optimal basic feasible solution, and let \(N\in\mathbb{R}^{m\times(n-m)}\) denote the corresponding nonbasic columns. After reordering variables as $x=(x_B,x_N)$, the equality system gives
\begin{equation}
    x_B
    =
    B^{-1}b - B^{-1}N x_N .
    \label{eq:basic_solution}
\end{equation}
Let $c_B$ and $c_N$ denote the corresponding cost vectors. The dual multiplier associated with basis $B$ is $y^T = c_B^T B^{-1}$, and the reduced cost vector for the nonbasic columns is
\begin{equation}
    \bar c_N^T
    =
    c_N^T - c_B^T B^{-1}N .
    \label{eq:reduced_cost_vector}
\end{equation}
Equivalently, for an individual nonbasic column $j$,
\begin{equation}
    \bar c_j
    =
    c_j - c_B^T B^{-1} A_j,
    \label{eq:reduced_cost_single}
\end{equation}
where \(A_j\) denotes column \(j\) of \(A\).

We define the local reduced-cost degeneracy set-
\begin{equation}
    V_{\mathrm{deg}}
    =
    \left\{
        j \in \mathcal{N}
        \;\middle|\;
        |\bar c_j| \leq \varepsilon_{\mathrm{rc}}
    \right\},
    \label{eq:vdeg}
\end{equation}
where \(\mathcal{N}\) denotes the index set of nonbasic variables for the current basis and \(\varepsilon_{\mathrm{rc}}>0\) is a numerical zero tolerance. The cardinality $|V_{\mathrm{deg}}|$ is not, by itself, the dimension of $\mathcal{F}^*$. It is a local, computable proxy for the number of nonbasic columns that are potential zero-slope entering candidates under the current basis.

\begin{proposition}[Objective invariance along zero-reduced-cost directions]
\label{prop:objective_invariance}
Let \(B\) be a simplex basis associated with an optimal basic feasible solution of the LP relaxation. Consider a nonbasic variable $x_j$, $j \in \mathcal{N}$, whose reduced cost satisfies $|\bar c_j| \leq \varepsilon_{\mathrm{rc}}$. Suppose that $x_j$ moves from its current bound in a feasible simplex direction with step length $\theta \geq 0$, and that the basic variables are updated to preserve $Ax=b$. Then $|c^T\Delta x|\leq \theta\varepsilon_{\mathrm{rc}}$.
If \(\bar c_j=0\) exactly, the direction has zero objective slope; every feasible step along it remains on the optimal face.
\end{proposition}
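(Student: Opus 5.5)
The plan is to write the simplex direction explicitly using the basic-solution identity \eqref{eq:basic_solution}, compute the objective change, and recognize the reduced cost \eqref{eq:reduced_cost_single} as the slope.

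\emph{Step 1: the direction.} Let $x_j$ move from its current bound by $\sigma\theta$, where $\sigma=+1$ if it leaves a lower bound and $\sigma=-1$ if it leaves an upper bound, so that $\theta\ge 0$ is the step length. All other nonbasic variables stay fixed, so $\Delta x_N=\sigma\theta e_j$. Because $Ax=b$ must hold before and after the step, we need $B\,\Delta x_B + N\,\Delta x_N = 0$. By \eqref{eq:basic_solution} this forces $\Delta x_B = -\sigma\theta B^{-1}A_j$.

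\emph{Step 2: the objective change.} Substituting gives
\[
c^T\Delta x = c_B^T\Delta x_B + c_j\,\sigma\theta = \sigma\theta\bigl(c_j - c_B^T B^{-1}A_j\bigr) = \sigma\theta\,\bar c_j .
\]
Taking absolute values yields $|c^T\Delta x| = \theta|\bar c_j| \le \theta\varepsilon_{\mathrm{rc}}$. This is the claimed bound.

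\emph{Step 3: the exact case.} If $\bar c_j=0$, then $c^T\Delta x=0$ for every $\theta$ along the ray. Take any feasible step, meaning the bounds $\ell\le x+\Delta x\le u$ are respected; the equalities $Ax=b$ hold by construction. The new point is then feasible and has objective value $z^*$, since the starting basic feasible solution is optimal. By the definition of $\mathcal{F}^*$, the new point lies in $\mathcal{F}^*$. Convexity is not needed, because each point is checked directly against the defining conditions.

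The argument is essentially computational, so I do not expect a real obstacle. The one point needing care is bookkeeping of the direction sign for variables at upper bounds, and for free nonbasic variables, where $\sigma$ can be either sign. In every case the absolute value absorbs $\sigma$, and the step length is measured as $\theta=|\Delta x_j|$.
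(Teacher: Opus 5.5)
Your proof is correct and follows the paper's argument: obtain $\Delta x_B=-\theta B^{-1}A_j$ from Eq.~\eqref{eq:basic_solution}, identify $c^T\Delta x=\pm\theta\bar c_j$ via Eq.~\eqref{eq:reduced_cost_single}, and take absolute values. Your sign $\sigma$ merges the paper's separate lower- and upper-bound cases, and your explicit check that the new point satisfies the defining conditions of $\mathcal{F}^*$ spells out what the paper leaves implicit in the exact case $\bar c_j=0$.
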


\noindent\textit{Proof.}
 Assume first that $x_j$ is at its lower bound and is increased by $\theta \geq 0$. The upper-bound case is obtained by reversing the feasible direction and using the corresponding reduced-cost sign convention. Let $\Delta x_N = \theta e_j$, where $e_j$ is the unit vector for the entering nonbasic coordinate. To preserve the equality constraints, Eq.~\eqref{eq:basic_solution} gives \(\Delta x_B=-\theta B^{-1}A_j\).
The objective displacement is
\begin{align}
    c^T\Delta x
    &=
    c_B^T(-\theta B^{-1}A_j) + c_j\theta \nonumber \\
    &=
    \theta(c_j - c_B^T B^{-1}A_j) \nonumber \\
    &=
    \theta \bar c_j .
    \label{eq:objective_displacement}
\end{align}
The final equality follows from Eq.~\eqref{eq:reduced_cost_single}. Taking absolute values and using $|\bar c_j|\leq \varepsilon_{\mathrm{rc}}$ proves the bound. 
If $x_j$ is instead at its upper bound and is decreased by
$\theta \geq 0$, then $\Delta x_N=-\theta e_j$, so the same calculation gives
$c^T\Delta x=-\theta\bar c_j$. Hence $|c^T\Delta x|\leq\theta\varepsilon_{\mathrm{rc}}$ in either case.
If $\bar c_j=0$, then $c^T\Delta x=0$ for every feasible $\theta$, and the step length is limited only by primal feasibility and bound constraints.

The same calculation extends to simultaneous movements in several nonbasic
variables. If $\Delta x_N=d_N$ is supported on $V_{\mathrm{deg}}$ and the basic
variables are updated by $\Delta x_B=-B^{-1}N d_N$, then
\[
    c^T\Delta x=\bar c_N^T d_N,
    \qquad
    |c^T\Delta x|
    \leq
    \varepsilon_{\mathrm{rc}}\|d_N\|_1.
\]
Thus, the objective displacement is controlled directly by the reduced-cost
tolerance and the magnitude of the feasible displacement.

\subsection{LP-Bound Invariance and Search Non-Invariance}
\label{sec:basis_invariance}

The set \(V_{\mathrm{deg}}\) identifies nonbasic columns with near-zero reduced cost. When such a column admits a feasible simplex step, Proposition~\ref{prop:objective_invariance} shows that the corresponding objective displacement is zero, or bounded by the reduced-cost tolerance. Hence distinct LP states can have the same, or essentially the same, relaxation bound while exposing different solver-facing information.

For mixed-integer search, these differences can matter. Alternative LP states may expose different fractional integer variables, distances from integrality, active rows, reduced costs, and branching candidate sets, and can therefore alter subsequent branching and reoptimization. Section~\ref{sec:selection}
defines the metrics used to characterize these differences.

State selection therefore does not seek a stronger LP bound. It chooses among objective-equivalent or near-equivalent LP states before the subsequent branch-and-bound search. Section~\ref{sec:framework} develops the mechanism used to generate, evaluate, and commit such states.


\section{LP-State Steering Framework}
\label{sec:framework}

The framework uses the reduced-cost degrees of freedom characterized in Section~\ref{sec:geometry} to generate alternative LP states, evaluate them without altering the primary branch-and-bound trajectory, rank the resulting states, and commit a selected state only after compatibility checks.

Figure~\ref{fig:workflow} summarizes the workflow. The framework separates basis steering into four operations. First, candidate generation constructs alternative LP states using reduced-cost and LP-state information from the current degenerate face. Second, isolated scouting evaluates these states without altering the live branch-and bound trajectory. Third, rank-based selection orders candidates using fingerprints of the resulting LP states. Fourth, the intervention policy decides whether a selected state should be committed, skipped, evaluated through bounded lookahead, or executed concurrently as part of a small state portfolio.

\begin{figure}[t]
\centering
\resizebox{\textwidth}{!}{%
\begin{tikzpicture}[
    font=\small,
    node distance=0.8cm and 0.4cm, 
    >=stealth, 
    box/.style={
        draw=blue!60!black,
        fill=blue!4,
        thick,
        rounded corners=3pt,
        align=center,
        text width=2.4cm,       
        minimum height=1.0cm
    },
    outbox/.style={
        draw=teal!60!black,
        fill=teal!4,
        thick,
        rounded corners=3pt,
        align=center,
        text width=2.0cm,       
        minimum height=1.0cm
    },
    arrow/.style={->, >=stealth, thick, draw=black!70}
]

\node[box] (lp) {Current LP state\\(basis, solution, reduced costs)};
\node[box, right=of lp] (gen) {Candidate generation\\(\(K\) perturbation patterns)};
\node[box, right=of gen] (scout) {Isolated scouting\\(candidate LP states)};
\node[box, right=of scout] (rank) {Fingerprinting\\and rank-based selection};
\node[box, right=of rank] (policy) {Intervention policy};

\node[outbox, below=1.2cm of policy, xshift=-3.45cm] (skip) {Skip\\(B)};
\node[outbox, right=0.35cm of skip] (commit) {Commit\\selected state (S)};
\node[outbox, right=0.35cm of commit] (look) {Bounded lookahead\\ (L10)};
\node[outbox, right=0.35cm of look] (race) {Small state\\portfolio (R5)};

\draw[arrow] (lp) -- (gen);
\draw[arrow] (gen) -- (scout);
\draw[arrow] (scout) -- (rank);
\draw[arrow] (rank) -- (policy);

\draw[arrow] (policy.south) -- ++(0,-0.4) -| (skip.north);
\draw[arrow] (policy.south) -- ++(0,-0.4) -| (commit.north);
\draw[arrow] (policy.south) -- ++(0,-0.4) -| (look.north);
\draw[arrow] (policy.south) -- ++(0,-0.4) -| (race.north);

\end{tikzpicture}%
}
\caption{Workflow of the basis-steering framework.}
\label{fig:workflow}
\end{figure}

\subsection{Candidate Generation and Isolated Scouting}
\label{sec:candidate_generation}

Candidate generation operates on the reduced-cost degeneracy set \(V_{\mathrm{deg}}\) defined in Eq.~\eqref{eq:vdeg}. Perturbations are supported only on nonbasic variables whose current reduced costs satisfy the numerical tolerance defining \(V_{\mathrm{deg}}\). In the computational implementation, this threshold is configured as \(\varepsilon_{\mathrm{rc}}=10^{-6}\).

For each candidate index \(k=1,\ldots,K\), the generator constructs a deterministic perturbation vector \(d^{(k)}\in\mathbb{R}^n\), with \(d^{(k)}_j=0\) for all \(j\notin V_{\mathrm{deg}}\). The perturbed objective is
\begin{equation}
    c^{(k)} = c + d^{(k)}.
    \label{eq:perturbed_objective}
\end{equation}
For eligible variables, the perturbation direction follows four recurring configurations: two hash-directed patterns, a direction toward the nearest integer value, and a direction toward the opposite integer value. For
fractional variables, the perturbation magnitude is scaled by the distance in the selected direction, subject to a positive lower bound. Variables that are integral in the current relaxation use a hash-directed perturbation. The construction is deterministic and reproducible; the candidate index and variable identifier diversify the resulting perturbation patterns.

The candidate budget \(K\) controls the maximum number of perturbation patterns attempted. Increasing \(K\) provides more opportunities to obtain alternative LP states at the expense of higher scouting cost. The effective candidate pool may contain fewer than \(K\) states because a perturbation can reproduce the baseline state, duplicate a prior candidate, or fail numerical or compatibility checks.

Candidate scouting is performed outside the active branch-and-bound trajectory using SCIP probing. For each candidate, the framework queries the current LP solution and reduced costs, applies the temporary objective perturbation, resolves the modified LP relaxation, and records the resulting LP solution, basis-status information, and structural metrics. The probing changes are reverted before the primary solve resumes. Only a candidate accepted by the intervention policy is subsequently committed for use by branch-and-bound.

Section~\ref{sec:bestofk_audit} evaluates candidate availability through the
best-of-\(K\) audit.

\subsection{Candidate Fingerprints and Rank-Based Selection}
\label{sec:selection}

Candidate generation produces a finite pool of LP states that are equivalent,
or nearly equivalent, with respect to the original LP objective. Selection
therefore ranks candidate states for their downstream use rather than seeking a
stronger relaxation bound.

For each candidate, the framework records a fingerprint of branch-relevant
LP-state quantities. The ten selector features measure fractional repair,
binding structure, reduced-cost degeneracy, branch-candidate exposure, active
row density, fractionality, numerical repair behavior, basis structure, and
displacement from the original LP state. The features are computed after
resolving the candidate LP and therefore describe the state that would be
exposed to subsequent branch-and-bound search.

Let \(f_q(k)\) denote feature \(q\) for candidate \(k\). Each feature is
converted to a normalized within-pool rank \(\rho_q(k)\in[0,1]\), oriented so
that \(0\) corresponds to the preferred direction for that feature and \(1\)
to the least preferred direction. Tied feature values receive their average
rank. The selector score is
\begin{equation}
    R(k)
    =
    \sum_{q=1}^{Q} w_q \rho_q(k),
    \label{eq:rank_selector}
\end{equation}
where \(w_q\geq 0\). Candidates with smaller \(R(k)\) are preferred. Using
within-pool ranks reduces sensitivity to differences in feature scale across
instances and candidate pools.

The selector uses \(Q=10\) rank-normalized features. The feature directions
and weight vector were tuned on the 20-instance pilot set and fixed before
validation. The complete feature definitions and coefficient vector are
reported in Appendix~\ref{app:selector-weight-vector}.

\subsection{Solver Modes and Resource Model}
\label{sec:experimental_modes}

The computational study uses four solver modes. Mode B is the unmodified SCIP reference. Mode S commits the selector's highest-ranked candidate when the intervention checks permit it. Modes L10 and R5 use the selector's top five candidates but differ in how they exploit them: L10 evaluates each candidate for up to 10 branch-and-bound nodes before selecting one to continue, whereas R5 executes the five candidate trajectories concurrently until one proves optimal.

\begin{table}[t]
\TABLE
{Solver Modes and Resource Model\label{tab:modes}}
{\begin{tabular}{@{}l@{\quad}l@{\quad}l@{\quad}l@{\quad}c@{}}
\hline\up
Mode & Policy & Candidate set & Downstream evaluation & Allocated CPUs/task \\ \hline\up
B   & Default SCIP             & None            & None                         & 1 \\
S   & Top-one steering         & Ranked top one  & None                         & 1 \\
L10 & Bounded lookahead        & Ranked top five & Up to 10 nodes per candidate & 6 \\
R5  & Top-five state portfolio & Ranked top five & Run until first proof        & 6\down\\
\hline
\end{tabular}}
{L10 is evaluated only on the 20-instance pilot set. The 130-instance MIPLIB 2017 validation set compares B, S, and R5.}
\end{table}

In R5, the five selected states are launched from the same live root-solve state and continue independently. The first child to prove optimal determines the run outcome; the remaining children are terminated, and the parent does
not reinject the winning state and solve the problem again. B and S use one CPU per task. L10 and R5 use six, corresponding to the parent process and up to five candidate children. Wall-clock comparisons are therefore
 nterpreted under these resource allocations.

\subsection{State Commitment and Intervention Policy}
\label{sec:injection_policy}

The selector ranks candidate states, but ranking does not imply commitment.
Before modifying the live solve, the implementation verifies that the selected
state is available and that its row and column dimensions match the current LP.
If either check fails, the candidate is discarded and SCIP continues from its
current state.

If the checks pass, the saved state is installed in the live LP interface and
the LP is reoptimized from that state by primal simplex. Commitment succeeds
only if state installation succeeds and the resulting LP is optimal. The
SCIP-level LP representation is then synchronized with the LP-interface state
before branch-and-bound continues.

The implementation also supports an optional conservative acceptance gate.
For an LP state \(x\), let \(\phi(x)\) denote the number of fractional binary
variables and let

    $\sigma(x)=\sum_{i\in\mathcal B}
    |x_i-\operatorname{round}(x_i)|$
denote total binary fractionality, where \(\mathcal B\) is the set of binary
variables. When enabled, the gate requires
\begin{equation}
    \phi(x_{\mathrm{base}})-\phi(x_{\mathrm{sel}})
    \ge
    \max\left\{10,\left\lceil0.05\phi(x_{\mathrm{base}})\right\rceil\right\},
    \qquad
    \sigma(x_{\mathrm{sel}})
    \le
    \sigma(x_{\mathrm{base}})+\tau
\end{equation}
where \(\tau\) is the solver feasibility tolerance. This gate is separate from
the learned ranking rule and rejects selected states that do not satisfy the
minimum fractionality-improvement criterion.

After commitment, SCIP resumes its native branching, cutting, and heuristic
machinery. Basis steering changes the LP state presented to these components;
it does not replace the branching rule.

\section{Experimental Design and Metrics}
\label{sec:experiments}

The computational study consists of a controlled witness instance and two
benchmark sets. The witness examines how alternative LP states affect subsequent branch-and-bound search. A 20-instance pilot set is used for candidate-availability analysis,
selector fitting, and policy ablations. A separate 130-instance MIPLIB 2017 validation set is used to evaluate the frozen policies.

\subsection{Experimental Program and Endpoints}
\label{sec:benchmark_protocol}

Comparisons are paired by instance and random seed. For each pair, solver
modes use the same SCIP build, underlying solver configuration, and a two-hour time limit; only mode-specific settings differ. Each solver process is single-threaded. The resource allocations for each mode are given in Section~\ref{sec:experimental_modes}.

The primary endpoint is the proportion of runs that prove optimality within the time limit. Other termination statuses are reported separately.
Secondary endpoints include the Weighted Variability Score (WVS), shifted
geometric means of runtime and recorded node counts, tail behavior, and per-instance differences in the number of optimally solved seeds. Wall-clock comparisons are interpreted under the different resource allocations of the solver modes.

The selector and operational policies are fixed before evaluation on the
130-instance validation set. No selector fitting or policy modification
is performed on the validation data. The experiments use a modified SCIP 9.2.1 build with SoPlex 7.1.3;
hardware, build configuration, and additional reproducibility details
are reported in Appendix~\ref{app:software-hardware-resource-accounting}.

\subsection{Weighted Variability Score}
\label{sec:wvs_metric}

We use the Weighted Variability Score (WVS) as defined in \citet{Adegbindin2026VariabilityScore}. The metric combines normalized runtime, final relative gap, LP iterations, and branch-and-bound nodes into a per-instance measure of seed-to-seed variability. Lower scores indicate greater stability. We retain the normalization, component weights, and treatment of time-limited runs specified in the original definition.

\subsection{Controlled Witness Instance}
\label{sec:toy_instance}

The controlled witness is a 13-variable synthetic instance constructed to
isolate the effect of LP-state variation on subsequent branch-and-bound
behavior. Its root relaxation has a degenerate optimal face on which multiple LP states attain the same relaxation bound while exposing different fractional profiles to the branching rule.

A balance constraint creates the degenerate optimal face, while overlapping cardinality constraints and linking inequalities make different states on that face lead to different downstream search trajectories. The full formulation is given in Appendix~\ref{app:witness-instance}. We evaluate the witness over
1,000 paired seeds.

\subsection{Pilot Set and Best-of-\(K\) Candidate-Availability Audit}
\label{sec:bestofk_protocol}

The pilot set contains 20 benchmark instances selected for variability,
degeneracy, or symmetry. It is used to evaluate candidate availability, fit the rank-based selector, and compare top-one, bounded-lookahead, and top-five portfolio policies before validation.

For each instance and seed, the generator attempts up to \(K=20\)
candidate LP states. Invalid, duplicate, incompatible, or numerically
unusable candidates are discarded. The remaining candidates are
evaluated under the same two-hour downstream time limit.

The primary best-of-\(K\) audit selects one candidate per instance-seed
pair using a lexicographic criterion. Candidates that prove optimality
are preferred to those that do not. Within each optimality-status group,
candidates are ordered by increasing node count, followed by wall-clock
time, LP iterations, and candidate index. Because this selection uses
observed downstream outcomes, best-of-\(K\) is a diagnostic oracle rather
than a deployable solver policy.

\subsection{Validation Protocol and Run Accounting}
\label{sec:validation_protocol}

The validation experiment compares B, S, and R5 on 130 additional
MIPLIB 2017 instances, with a target of 50 paired seeds per instance. L10 is evaluated only on the 20-instance pilot set. 

Run outcomes are parsed from raw solver logs and run-level
records. Each record identifies the instance, seed, mode, termination
status, wall-clock time, node count, LP iterations, primal and dual
bounds, and steering diagnostics.

In R5, the first portfolio child to prove optimality determines the
run outcome. The remaining children are terminated, and the parent
does not rerun the winning candidate. R5 node counts and LP iterations are taken from the winning child's
record; they do not represent the sum of work across all portfolio
children. These measures characterize the winning trajectory rather
than the total computational cost of the portfolio. R5 wall-clock results
are interpreted under its six-CPU allocation.

Replication materials are described in
Appendix~\ref{app:replication-materials}.

\section{Computational Results}
\label{sec:results}
The results proceed from mechanism to held-out validation. We first examine
LP-state causality on the controlled witness, then candidate availability and
policy recovery on the 20-instance pilot set, and finally the frozen policies
on the 130-instance MIPLIB 2017 validation set.

\subsection{Controlled Witness: LP-State Causality}
\label{sec:witness_results}

On the controlled witness, we compare the baseline mode \(B\) with the
LP-state steering mode \(S\) over 1,000 paired random seeds. Both modes solve all 1,000 runs to optimality, so the comparison is not affected by
solved-versus-unsolved censoring. Figure~\ref{fig:witness_distribution} plots the empirical distributions of branch-and-bound node counts and LP iterations. For each computational effort metric \(X\), the figure maps the empirical cumulative distribution function

\[
\label{eq:ecdf}
F_n(x)=\frac{1}{n}\sum_{j=1}^{n}\mathbf{1}\{X_j\leq x\},
\]
where curves lying further to the left correspond to lower computational effort.

\begin{figure}[t]
\centering
\includegraphics[width=0.98\textwidth]{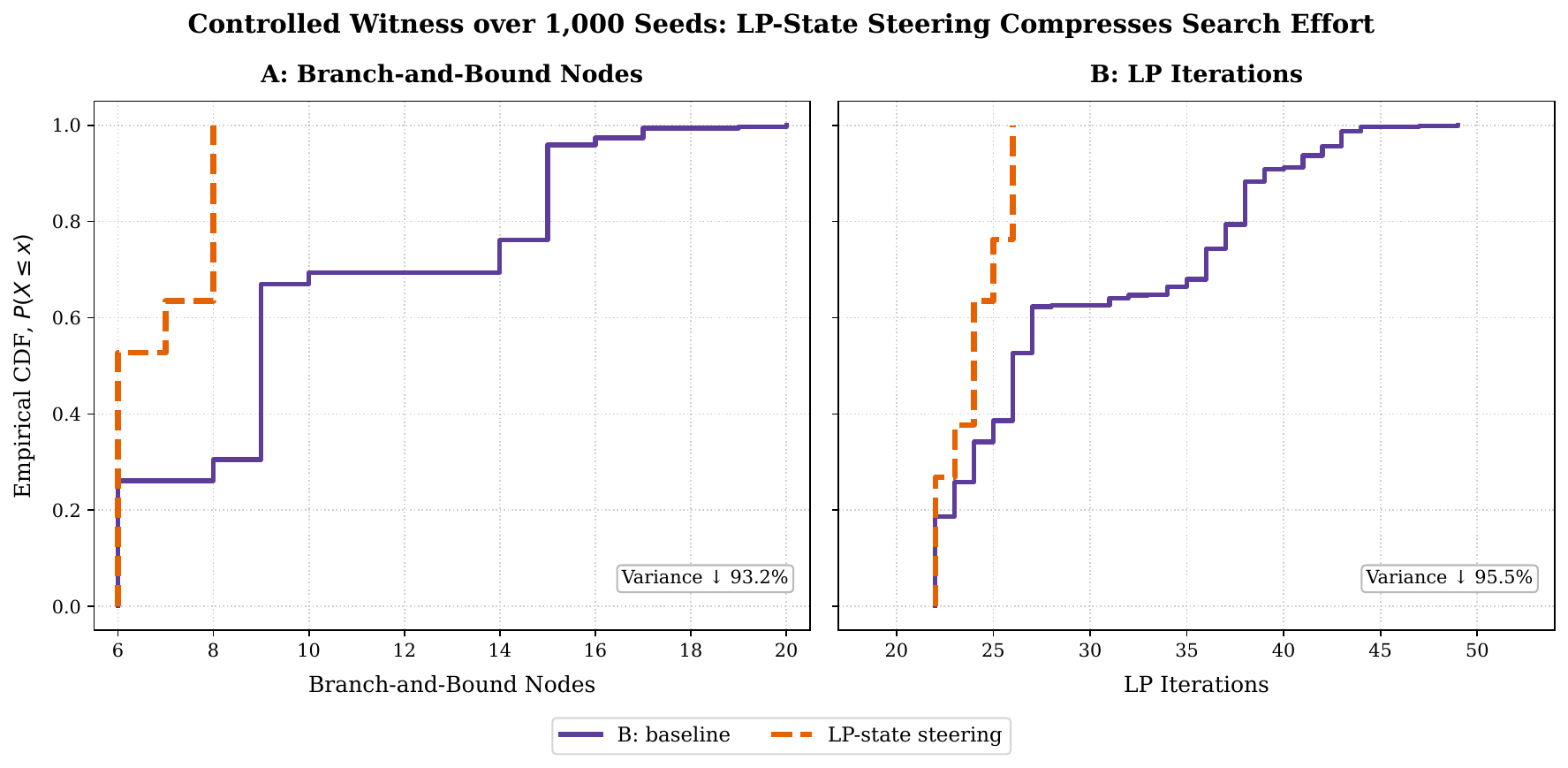}
\caption{Empirical effort distributions over 1000 paired random seeds on the controlled witness instance.}
\label{fig:witness_distribution}
\end{figure}

For branch-and-bound nodes, mode \(B\) has median 9, maximum 20, and standard
deviation 3.58. Mode \(S\) reduces the median to 6, the maximum to 8, and the
standard deviation to 0.93, corresponding to a \(93.2\%\) reduction in raw
node-count variance. For LP iterations, the maximum decreases from 49 to 26
and the raw variance decreases by \(95.5\%\). In paired node-count comparisons,
\(S\) improves 739 runs, ties 261, and exhibits zero performance regressions.

\begin{table}[t]
\TABLE
{Controlled Witness Results\label{tab:witness_controlled}}
{\begin{tabular}{@{}llrrrrrrr@{}}
\hline\up
Metric & Mode & Min & Median & Mean & Max & Std. & CV & MCV \\ \hline\up
Nodes
& \(B\): baseline & 6 & 9 & 10.048 & 20 & 3.578 & 35.6\% & 0.282 \\
& \(S\): LP-state steering & 6 & 6 & 6.838 & 8 & 0.930 & 13.6\% & 0.126 \\ \hline
LP iterations
& \(B\): baseline & 22 & 26 & 29.409 & 49 & 7.062 & 24.0\% & 0.204 \\
& \(S\): LP-state steering & 22 & 24 & 23.956 & 26 & 1.503 & 6.3\% & 0.051\down\\ \hline
\end{tabular}}
{Results are based on 1000 paired random seeds.}
\end{table}

Using the WVS definition from Section~\ref{sec:wvs_metric}, the controlled
witness score decreases from \(0.06899\) under \(B\) to \(0.02276\) under
\(S\), a \(67.0\%\) reduction. Because all trajectories terminate at global optimality and SCIP logs a wall-clock runtime of \(0.00\) seconds for every execution at the reported precision, the runtime and gap metrics are degenerate. Consequently, the witness WVS is entirely driven by the remaining non-degenerate work components: LP iterations and branch-and-bound node counts.

State injection succeeds in all 1,000 steered runs. The median number of
fractional LP branching candidates at the root decreases from 8 before
steering to 1 after the selected LP state is injected. Together, these
diagnostics link the change in the inherited LP state to the change in the
branching-visible fractional profile and the downstream search distribution.

\subsection{Candidate Availability on the 20-Instance Pilot Set}
\label{sec:bestofk_audit}

Prior to selector evaluation, we quantify candidate availability. The best-of-K audit serves as a hindsight diagnostic oracle: for each instance-seed pair, it evaluates the generated LP states and isolates the optimal downstream outcome under the criterion defined in Section~\ref{sec:bestofk_protocol}. This protocol decouples candidate availability from candidate selection. If the generated pool rarely contains a state that improves on the baseline, candidate generation is the limiting step. Conversely, if high-performance alternative states are consistently present, the remaining problem is to identify them using only pre-branching data.

\begin{figure}[t]
\centering
\includegraphics[width=0.96\textwidth]{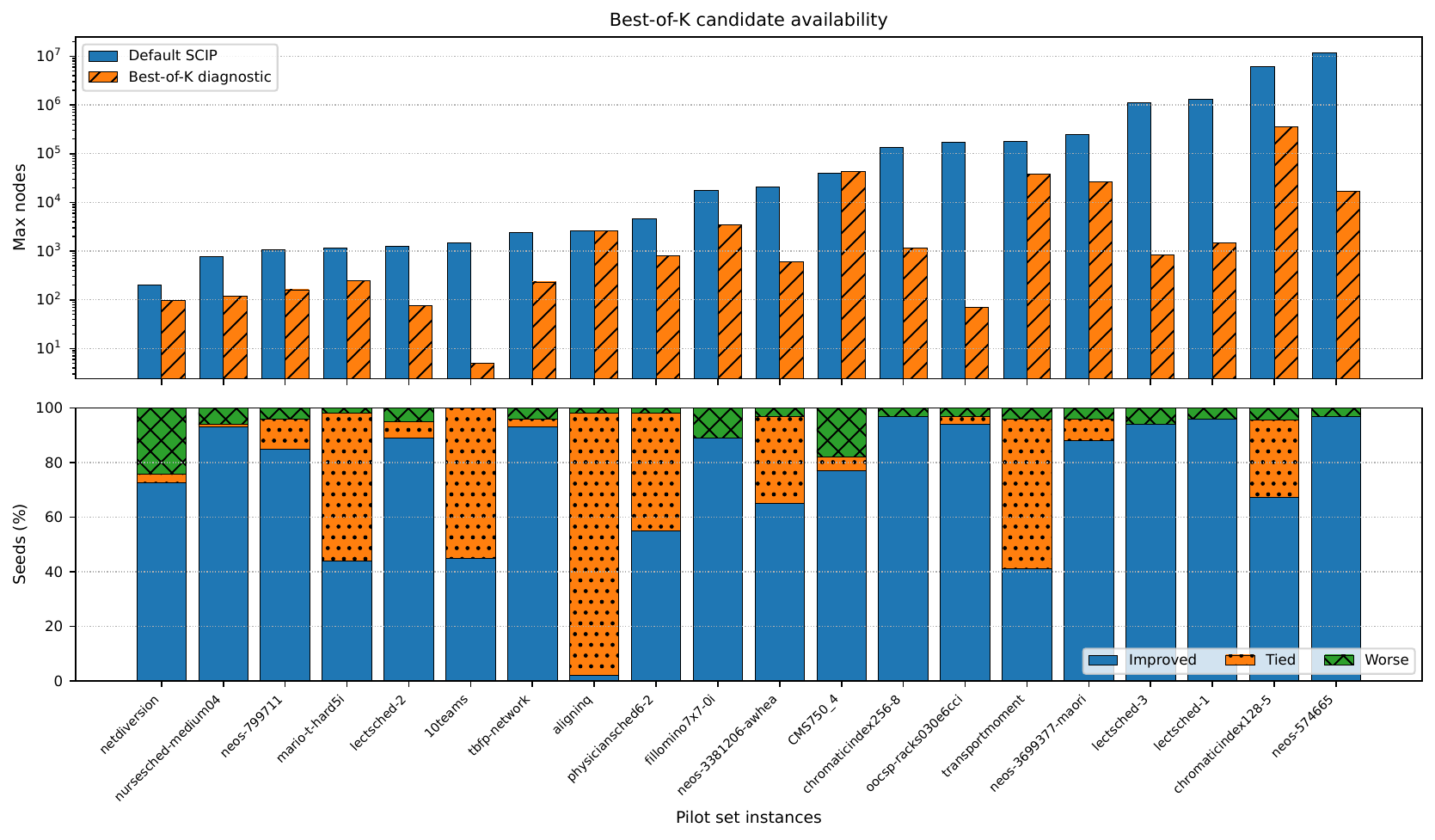}
\caption{Best-of-\(K\) candidate availability on the 20-instance pilot set. The upper panel reports the maximum branch-and-bound node count over the paired seeds on a logarithmic scale; the lower panel reports paired seedwise outcomes relative to default SCIP.}
\label{fig:bestofk_availability}
\end{figure}

\begin{table}[t]
\TABLE
{Best-of-\(K\) Candidate Availability on the 20-Instance Pilot Set\label{tab:bestofk_availability}}
{\begin{tabular}{@{}lrrrrr@{}}
\hline\up
Instance & \(n\) & Default max & Best-of-\(K\) max & Reduction (\%) & I/T/W \\
\hline\up
netdiversion & 99 & 199.0 & 97.0 & 51.3 & 72/3/24 \\
nursesched-medium04 & 100 & 784.0 & 119.0 & 84.8 & 93/1/6 \\
neos-799711 & 100 & 1,088.0 & 161.0 & 85.2 & 85/11/4 \\
mario-t-hard5i & 100 & 1,153.0 & 245.0 & 78.8 & 44/54/2 \\
lectsched-2 & 100 & 1,277.0 & 77.0 & 94.0 & 89/6/5 \\
10teams & 100 & 1,499.0 & 5.0 & 99.7 & 45/55/0 \\
tbfp-network & 100 & 2,447.0 & 232.0 & 90.5 & 93/3/4 \\
aligninq & 100 & 2,616.0 & 2,616.0 & 0.0 & 2/96/2 \\
physiciansched6-2 & 100 & 4,571.0 & 811.0 & 82.3 & 55/43/2 \\
fillomino7x7-0i & 100 & 17,541.0 & 3,415.0 & 80.5 & 89/0/11 \\
neos-3381206-awhea & 100 & 20,494.0 & 613.0 & 97.0 & 65/32/3 \\
CMS750\_4 & 100 & 39,548.0 & 42,670.0 & -7.9 & 77/5/18 \\
chromaticindex256-8 & 100 & 135,071.0 & 1,148.0 & 99.2 & 97/0/3 \\
oocsp-racks030e6cci & 100 & 170,580.0 & 71.0 & 99.9 & 94/3/3 \\
transportmoment & 97 & 176,421.0 & 37,764.0 & 78.6 & 40/53/4 \\
neos-3699377-maori & 100 & 248,617.0 & 26,324.0 & 89.4 & 88/8/4 \\
lectsched-3 & 100 & 1,102,902.0 & 832.0 & 99.9 & 94/0/6 \\
lectsched-1 & 100 & 1,308,191.0 & 1,490.0 & 99.9 & 96/0/4 \\
chromaticindex128-5 & 92 & 6,046,208.0 & 350,394.0 & 94.2 & 62/26/4 \\
neos-574665 & 100 & 11,796,118.0 & 16,905.0 & 99.9 & 97/0/3\down\\
\hline
\end{tabular}}
{Best-of-\(K\) audit metrics on the 20-instance pilot set. Here, \(n\) denotes the number of programmatically valid paired seeds; I/T/W reports the exact frequency of improved, tied, and worse node-count outcomes relative to default SCIP.}
\end{table}

Figure~\ref{fig:bestofk_availability} summarizes candidate availability on the 20-instance pilot set. Along the horizontal axis, instances are ordered from left to right by increasing maximum node count under default SCIP. The upper panel reports the maximum observed branch-and-bound node count over the valid paired seed set. For default SCIP (solid blue bars), this is the largest observed tree size across the sampled seeds. For the best-of-\(K\) diagnostic (hatched orange bars), one candidate is selected for each instance-seed pair using the hindsight rule defined in Section~\ref{sec:bestofk_protocol}, and the panel reports the maximum node count among the resulting selected runs. The logarithmic y-axis accommodates the range from hundreds to millions of nodes.

The lower panel reports how often the best-of-\(K\) alternative outperforms default SCIP over the paired seeds. A paired run is classified as improved (plain blue) or worse (green cross-hatched) first by optimality status; when status does not distinguish the pair, the classification is determined by node count. A tie (orange dotted) denotes an unchanged node count when neither run has a status advantage. The y-axis is normalized to percentages so that the seedwise outcome frequencies are directly comparable across instances.

Taken together, the two panels reveal a large and strongly instance-dependent opportunity for LP-state selection. On several instances with large baseline tails, the best-of-\(K\) diagnostic reduces the maximum observed node count by roughly one to three orders of magnitude. The seedwise panel shows that these tail reductions are often accompanied by frequent improvements across seeds,
while also revealing that improvement frequency and extreme-tail behavior need not coincide.

Table~\ref{tab:bestofk_availability} quantifies the scale of the candidate
availability visible in Figure~\ref{fig:bestofk_availability}. For
\texttt{neos-574665}, the maximum observed node count decreases from
11,796,118 under default SCIP to 16,905 under the best-of-\(K\) diagnostic.
For \texttt{lectsched-1}, it decreases from 1,308,191 to 1,490, and for
\texttt{lectsched-3}, from 1,102,902 to 832. Reductions of 99.9\% also occur
on \texttt{oocsp-racks030e6cci} and \texttt{10teams}, while
\texttt{chromaticindex256-8} achieves a 99.2\% reduction. Across the pilot
set, the best-of-\(K\) diagnostic improves more than half of the paired
outcomes on 16 of the 20 instances.

The table also shows that improvement frequency and tail behavior are distinct.
On \texttt{aligninq}, 96 of 100 paired outcomes tie and the maximum node count
is unchanged. On \texttt{CMS750\_4}, 77 of 100 paired outcomes improve even
though the maximum observed node count increases by 7.9\%. Conversely,
\texttt{10teams} records 45 improvements, 55 ties, and no worse paired
outcomes while reducing the maximum node count from 1,499 to 5.

Figure~\ref{fig:lectsched-bestofk-ecdf} isolates the seed-level behavior for the instance \texttt{lectsched-1} .

\begin{figure}[t]
\centering
\includegraphics[width=0.96\textwidth]{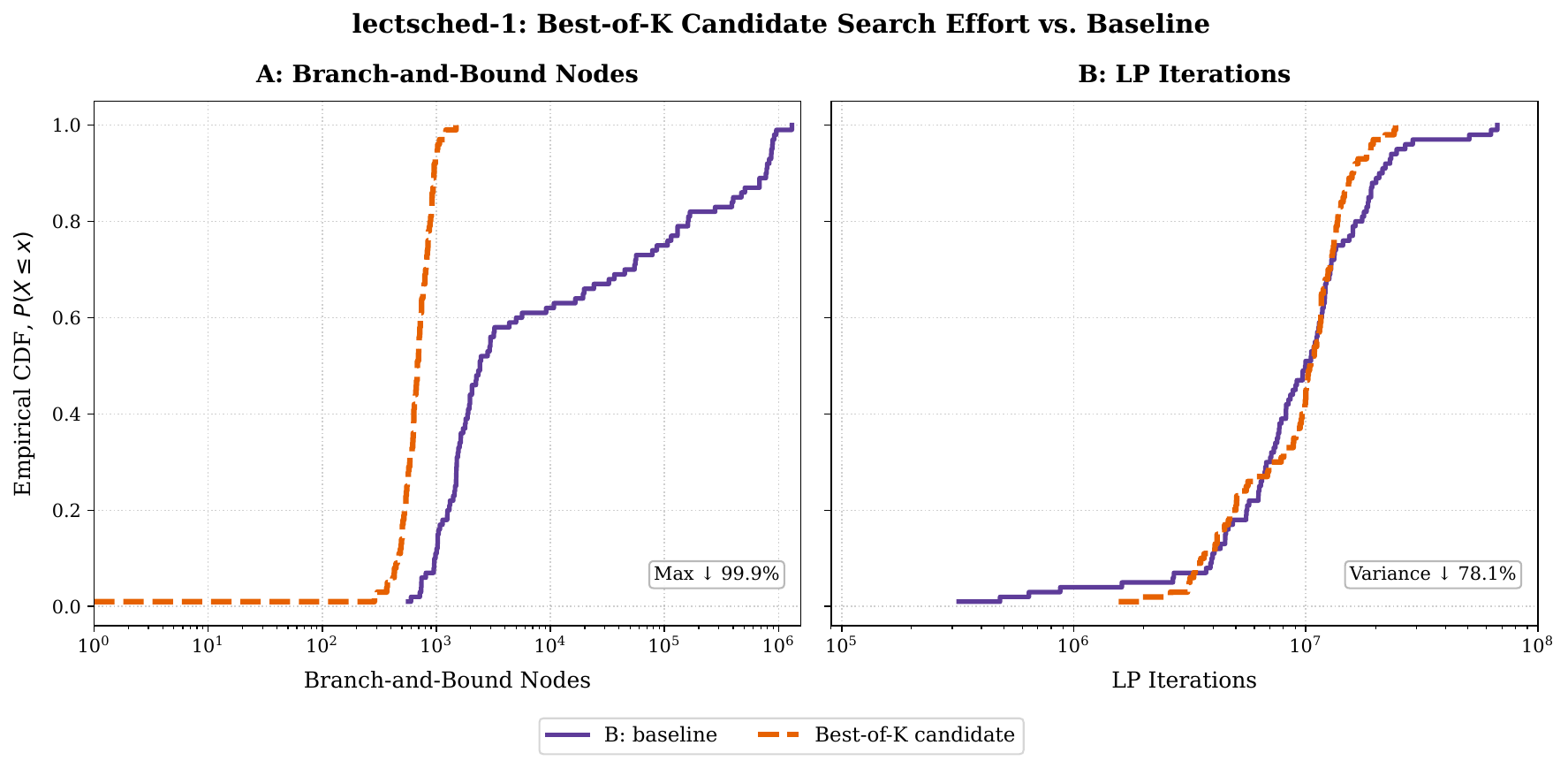}
\caption{Empirical distributions of branch-and-bound nodes and LP iterations
for baseline SCIP and the best generated LP state on \texttt{lectsched-1},
selected retrospectively by downstream node count.}
\label{fig:lectsched-bestofk-ecdf}
\end{figure}

The empirical distributions show that the improvement on
\texttt{lectsched-1} is not confined to an extreme run. In Panel~(A), the
best-of-\(K\) node-count distribution shifts sharply to the left and becomes
substantially more concentrated, while the baseline distribution extends
beyond \(10^6\) nodes. In Panel~(B), the central portions of the LP-iteration
distributions overlap more closely, but the best-of-\(K\) runs have a much
tighter upper tail, corresponding to a \(78.1\%\) reduction in LP-iteration
variance. Thus, the dominant effect on this instance is compression of the
branch-and-bound tree, accompanied by lower dispersion in LP work.

This candidate-availability result defines the target for the implementable
policies: recover as much of the best-of-\(K\) advantage as possible using only
pre-branching LP-state information. Appendix~\ref{app:candidate-abundance}
examines whether useful states occur as isolated candidates or appear more
frequently within the generated pool.

\subsection{Policy Recovery on the 20-Instance Pilot Set}
\label{sec:discovery_policy_results}

The best-of-\(K\) audit establishes that useful LP states are often present in
the generated candidate pool. We next evaluate how much of that opportunity is
recovered by the implementable policies on the same 20-instance pilot set. The
comparison includes the default solver \(B\), top-one steering \(S\), bounded
top-five lookahead \(L10\), and the top-five state portfolio \(R5\).

Figure~\ref{fig:discovery_policy_recovery} isolates \(R5\) against \(B\).
Each row represents one pilot instance, ordered from highest to lowest baseline
WVS. Panel~(A) compares WVS, with \(B\) shown by blue circles and \(R5\) by
orange squares; a leftward shift indicates lower variability. Panel~(B)
compares the maximum observed branch-and-bound node count over the paired seed
set on a logarithmic scale, where a leftward shift indicates a smaller
observed tail. The connecting segments show the direction and magnitude of the
change from \(B\) to \(R5\): longer leftward segments indicate larger
improvements, while an orange marker to the right of its blue counterpart
indicates a regression.

\setlength{\textfloatsep}{12pt plus 0pt minus 0pt}

\begin{figure}[t]
\centering
\includegraphics[width=0.98\textwidth]{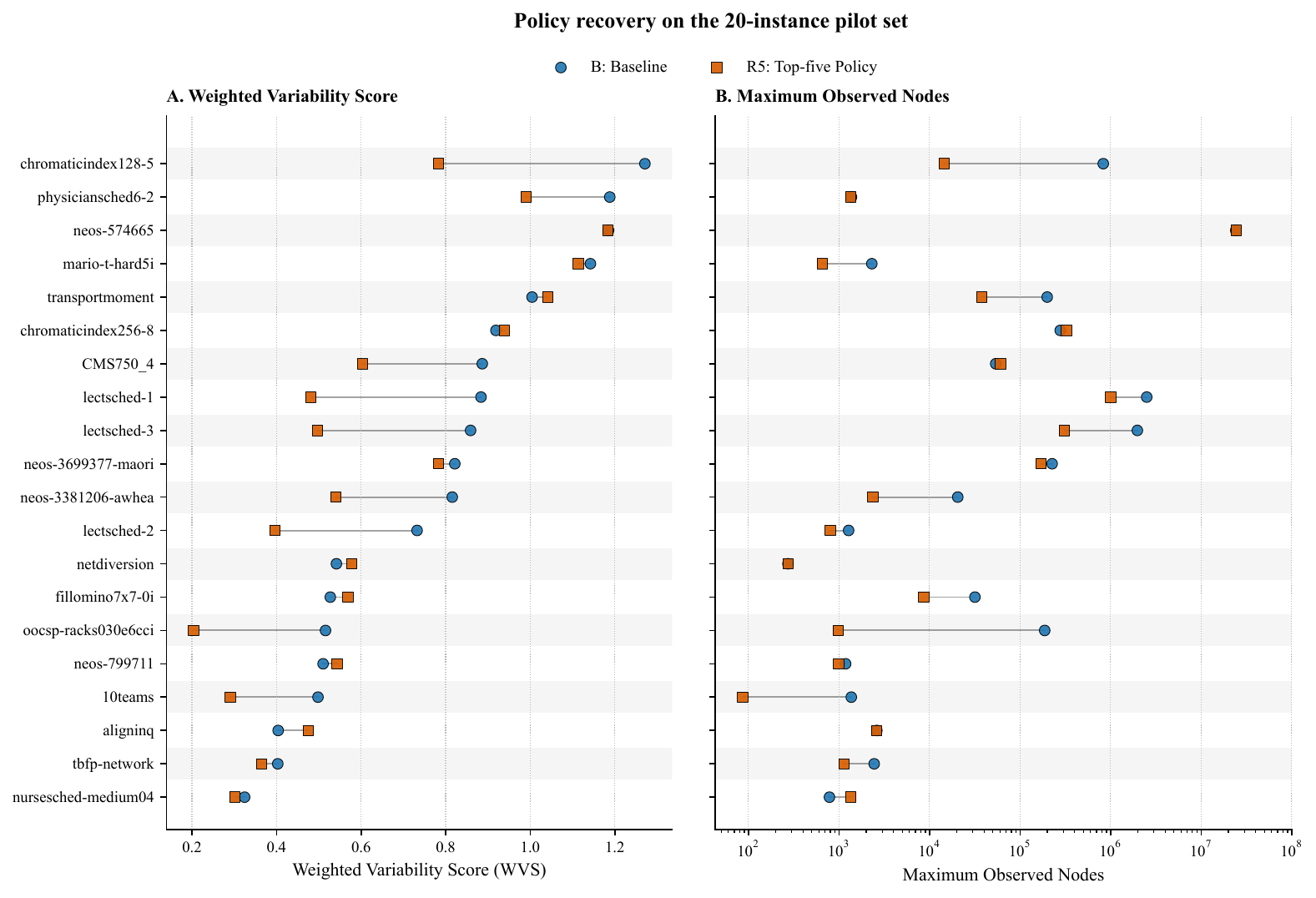}
\caption{Policy recovery on the 20-instance pilot set. Panel~(A) compares WVS
under \(B\) and \(R5\); Panel~(B) compares maximum observed branch-and-bound
nodes over the paired seed set. Instances are ordered by baseline WVS.}
\label{fig:discovery_policy_recovery}
\end{figure}

The two panels show that WVS and the observed node-count tail capture distinct
aspects of policy performance. Several instances move substantially to the
left in both panels, while others improve on only one measure. For example,
on \texttt{transportmoment}, \(R5\) reduces the maximum node count from
199,204 to 37,764 while WVS changes from 1.004 to 1.041.

\begin{table}[t]
\TABLE
{Policy Recovery on the 20-Instance Pilot Set
\label{tab:discovery_policy_recovery}}
{\begin{tabular}{@{}lrrrrrrrrr@{}}
\hline\up
Instance & \(n\) & B max & S max & L10 max & R5 max & WVS B & WVS S & WVS L10 & WVS R5 \\
\hline\up
netdiversion & 100 & 273 & 273 & 273 & 273 & 0.542 & 0.540 & 0.573 & 0.578 \\
nursesched-medium04 & 100 & 784 & 917 & 1,344 & 1,348 & 0.324 & 0.294 & 0.290 & 0.302 \\
neos-799711 & 100 & 1,183 & 1,183 & 1,058 & 994 & 0.510 & 0.502 & 0.537 & 0.544 \\
lectsched-2 & 100 & 1,277 & 1,277 & 1,498 & 799 & 0.732 & 0.566 & 0.574 & 0.396 \\
10teams & 100 & 1,367 & 985 & 1,454 & 86 & 0.498 & 0.455 & 0.407 & 0.291 \\
physiciansched6-2 & 100 & 1,372 & 2,143 & 1,160 & 1,352 & 1.188 & 1.068 & 1.100 & 0.990 \\
mario-t-hard5i & 100 & 2,304 & 2,047 & 1,380 & 654 & 1.142 & 1.185 & 1.192 & 1.114 \\
tbfp-network & 100 & 2,447 & 3,320 & 2,978 & 1,133 & 0.403 & 0.511 & 0.406 & 0.364 \\
aligninq & 100 & 2,616 & 2,616 & 2,616 & 2,616 & 0.404 & 0.401 & 0.381 & 0.476 \\
neos-3381206-awhea & 100 & 20,494 & 8,404 & 6,397 & 2,368 & 0.816 & 0.825 & 0.759 & 0.540 \\
fillomino7x7-0i & 100 & 31,717 & 41,996 & 44,343 & 8,649 & 0.527 & 0.540 & 0.640 & 0.569 \\
CMS750\_4 & 100 & 53,817 & 55,383 & 67,197 & 60,683 & 0.887 & 0.894 & 0.647 & 0.603 \\
oocsp-racks030e6cci & 100 & 187,000 & 349,940 & 510,245 & 979 & 0.516 & 0.671 & 0.726 & 0.203 \\
transportmoment & 100 & 199,204 & 199,204 & 64,878 & 37,764 & 1.004 & 1.052 & 1.005 & 1.041 \\
neos-3699377-maori & 100 & 225,948 & 159,972 & 170,885 & 170,885 & 0.822 & 0.740 & 0.727 & 0.783 \\
chromaticindex256-8 & 100 & 277,646 & 283,080 & 365,109 & 326,096 & 0.919 & 0.911 & 0.935 & 0.939 \\
chromaticindex128-5 & 100 & 829,345 & 756,177 & 588,760 & 14,524 & 1.271 & 1.176 & 1.151 & 0.783 \\
lectsched-3 & 100 & 1,973,974 & 2,191,729 & 2,546,842 & 308,961 & 0.859 & 0.798 & 0.824 & 0.497 \\
lectsched-1 & 100 & 2,504,428 & 2,489,672 & 2,731,104 & 1,006,427 & 0.884 & 0.856 & 0.885 & 0.481 \\
neos-574665 & 100 & 24,128,761 & 24,095,300 & 29,090,895 & 24,413,860 & 1.185 & 1.183 & 1.176 & 1.184\down\\
\hline
\end{tabular}}
{All comparisons use common paired instance-seed runs. B and S use one CPU
per task. L10 and R5 use six CPUs per task. Node columns report the
branch-and-bound node count of the returned solve under each mode's resource
model.}
\end{table}

Table~\ref{tab:discovery_policy_recovery} reports the full four-mode
comparison. On \texttt{oocsp-racks030e6cci}, \(R5\) reduces the maximum
observed node count from 187,000 to 979 while WVS decreases from 0.516 to
0.203. On \texttt{chromaticindex128-5}, the maximum decreases from 829,345
to 14,524 and WVS from 1.271 to 0.783. On \texttt{lectsched-1}, the maximum
decreases from 2,504,428 to 1,006,427 and WVS from 0.884 to 0.481; on
\texttt{lectsched-3}, the corresponding changes are 1,973,974 to 308,961
nodes and 0.859 to 0.497 in WVS. On \texttt{10teams}, the maximum node count
decreases from 1,367 to 86 and WVS from 0.498 to 0.291.

The gains are not uniform. On \texttt{neos-574665}, \(R5\) has essentially
the same WVS as \(B\) and a slightly larger maximum node count
(24,413,860 versus 24,128,761). On \texttt{aligninq}, the maximum node count
is unchanged at 2,616 while WVS increases from 0.404 to 0.476. These cases,
together with \texttt{transportmoment}, show that candidate recovery can affect
tail behavior and overall variability differently across instances.

The four-mode comparison also shows the value of retaining several highly
ranked candidates. Modes \(S\) and \(L10\) recover useful states on some
instances, but several of the largest pilot-set gains occur under \(R5\),
which retains the selector's top five states rather than committing to a
single ranked candidate.

Additional pilot-set profiles for median LP iterations and median wall-clock time are reported in Appendix~\ref{app:pilot-component-profiles}.

These pilot results lead to the held-out validation in
Section~\ref{sec:benchmark_results}. \(L10\) remains a pilot ablation, while \(B\), \(S\), and \(R5\) proceed to the 130-instance validation experiment.

\subsection{Validation on 130 MIPLIB 2017 Instances}
\label{sec:benchmark_results}
The final experiment evaluates the frozen selector and fixed policies on a
held-out set of 130 additional MIPLIB 2017 instances, with a target of 50
paired seeds per instance. The comparison includes the baseline solver \(B\),
top-one steering \(S\), and the top-five portfolio policy \(R5\). The validation set was constructed to cover both instance families and baseline variability roles. It includes graph-symmetry, industrial/numerical, market and machine-learning, network, NEOS, packing/production, scheduling, and set-assignment families, together with low-, medium-, and high-variability
instances. The family composition of the validation set and its distribution across
baseline tractability regimes are reported in
Appendix~\ref{app:validation-family-regimes}.

\begin{table}[t]
\TABLE
{Frozen Validation Summary on 130 Held-Out MIPLIB 2017 Instances\label{tab:validation_130_summary}}
{\begin{tabular}{@{}llrrrrrrrr@{}}
\hline\up
Mode  & Runs & Opt. & TL & Other & Solve rate & \(\Delta\) opt. & SGM time & SGM nodes & Median WVS \\
\hline\up
\(B\)  & 6,492 & 3,792 & 2,699 & 1 & 58.4\% & - & 893.4 & 2,574.7 & 0.213 \\
\(S\)  & 6,473 & 3,812 & 2,661 & 0 & 58.9\% & +20 & 899.8 & 2,602.0 & 0.215 \\
\(R5\)  & 6,498 & 3,894 & 2,604 & 0 & 59.9\% & +102 & 823.1 & 1,198.0 & 0.210\down\\
\hline
\end{tabular}}
{Aggregate validation results over all recorded runs. TL denotes time-limit termination, and \(\Delta\) opt.\ reports the difference in total optimal completions relative to \(B\). B and S use one CPU per task; R5 uses six. SGM time and SGM nodes use shifts of 10 seconds and 100 nodes, respectively.}
\end{table}

Table~\ref{tab:validation_130_summary} reports the aggregate validation
results. The headline quantities are solve completion, shifted geometric means
of wall-clock time and recorded node count, and median WVS. Relative to the
observed baseline total, \(R5\) increases optimal completions from 3,792 to
3,894, raises the solve rate from 58.4\% to 59.9\%, reduces the shifted
geometric mean of wall-clock time from 893.4 to 823.1 seconds, and reduces the
shifted geometric mean of recorded node count from 2,574.7 to 1,198.0.
Mode \(S\) records 20 additional optimal completions but does not improve the
shifted geometric means of time or nodes. Because the number of recorded runs
differs slightly across modes, the \(+20\) and \(+102\) entries are differences
in observed optimal-run totals rather than paired seedwise differences.

The WVS effect across all 130 instances is regime-dependent rather than
uniform. The median \(R5/B\) WVS ratio is 0.969; \(R5\) improves WVS on 73
instances and worsens it on 57. A paired Wilcoxon signed-rank test across the
130 instance-level WVS values gives \(p=0.1686\), so the full validation set
does not support a uniform WVS reduction. This motivates the regime-level
analysis below.

\begin{figure}[t]
\centering
\includegraphics[width=0.98\textwidth]{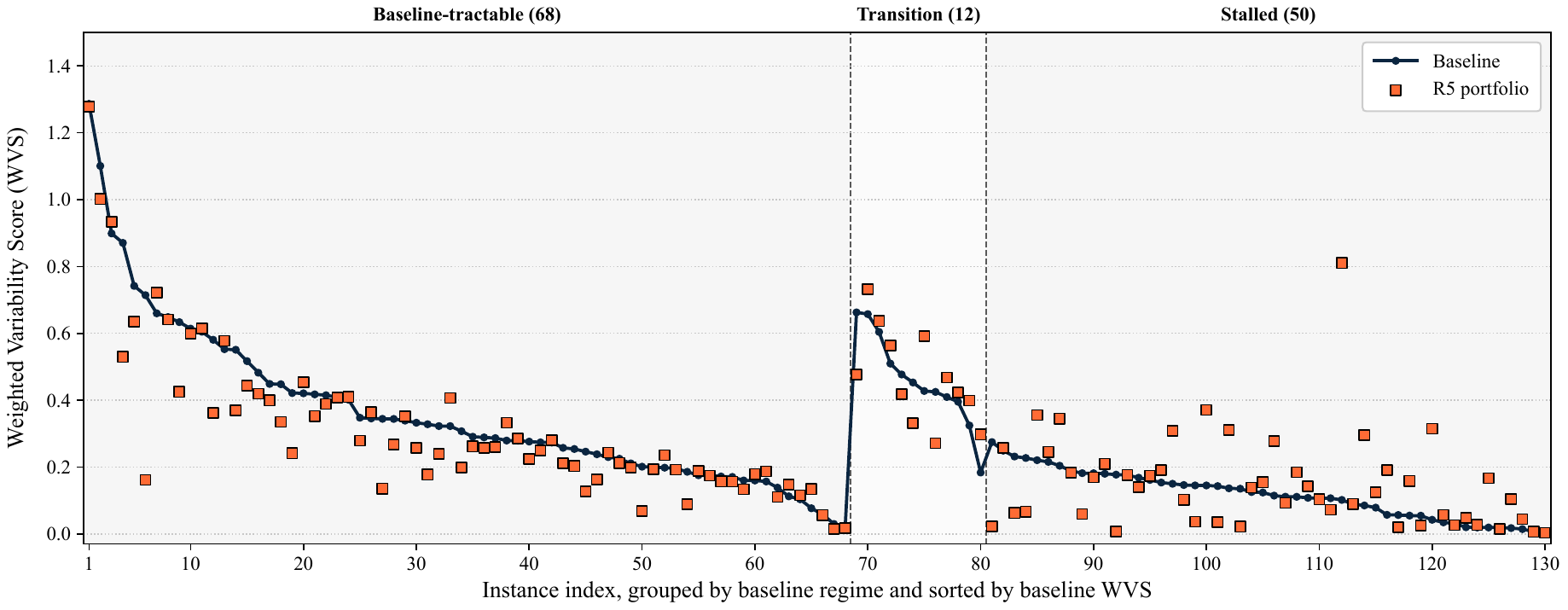}
\caption{Instance-level WVS on the 130-instance held-out validation set.
The blue line reports baseline WVS (\(B\)); the orange squares report WVS
under \(R5\). Instances are grouped by baseline tractability regime and sorted
within each regime by baseline WVS.}
\label{fig:validation_wvs_curve}
\end{figure}

Figure~\ref{fig:validation_wvs_curve} shows the instance-level WVS behavior
behind the aggregate result. To separate variability behavior from increasing
time-limit saturation, the validation instances are partitioned by observed
baseline solve rate. Instances with baseline solve rate at least \(0.90\) are
classified as baseline-tractable; those with solve rate strictly between
\(0.10\) and \(0.90\) form the transition regime; and those with solve rate at
most \(0.10\) are classified as stalled. Within each regime, the blue line gives the baseline WVS profile and the orange
squares give the corresponding \(R5\) values. In the baseline-tractable regime,
the orange markers lie below the blue curve for most instances. The transition
regime contains both WVS increases and decreases. In the stalled regime, the responses are more dispersed as the
two-hour time limit dominates a larger fraction of runs.

\begin{table}[t]
\TABLE
{Validation Behavior by Baseline Tractability Regime\label{tab:validation_regimes}}
{\begin{tabular}{@{}lrrrrrcrrr@{}}
\hline\up
Regime & Inst. & B solve & R5 solve & B WVS & R5 WVS & R5 WVS I/T/W & Avg. gain & Avg. loss & \(p\)-value \\
\hline\up
Baseline-tractable & 68 & 0.996 & 0.998 & 0.364 & 0.315 & 48/0/20 & 0.081 & 0.029 & \(7.90\times 10^{-5}\) \\
Transition & 12 & 0.655 & 0.738 & 0.461 & 0.467 & 4/0/8 & 0.130 & 0.075 & 0.850 \\
Stalled & 50 & 0.005 & 0.024 & 0.118 & 0.151 & 21/0/29 & 0.068 & 0.106 & 0.121 \down\\
\hline
\end{tabular}
}
{Regimes are defined by baseline solve rate: baseline-tractable if B solves at least 90\% of seeds, transition if B solves between 10\% and 90\% of seeds, and stalled if B solves at most 10\% of seeds. Average gain is the mean value of \( \mathrm{WVS}_B-\mathrm{WVS}_{R5}\) over instances where \(R5\) improves WVS. Average loss is the mean value of \( \mathrm{WVS}_{R5}-\mathrm{WVS}_B\) over instances where \(R5\) worsens WVS.}
\end{table}

Table~\ref{tab:validation_regimes} quantifies these regime differences. On the
68 baseline-tractable instances, \(R5\) improves WVS on 48 and worsens it on
20. The median per-instance \(R5/B\) WVS ratio is 0.910, corresponding to a
9\% reduction, and a paired Wilcoxon signed-rank test gives
\(p=7.90\times10^{-5}\). Among the improving instances, the mean WVS decrease
is 0.081; among the worsening instances, the mean increase is 0.029. The
largest observed decrease is 0.553 and the largest increase is 0.084.
This is the strongest WVS result in the validation experiment.

The 12 transition instances show a different effect. The average solve rate
increases from 0.655 under \(B\) to 0.738 under \(R5\), while mean WVS changes
from 0.461 to 0.467. The principal result in this regime is therefore increased
solve completion rather than lower WVS.

The 50 stalled instances are strongly affected by the time limit. Their average
baseline solve rate is 0.005, increasing to 0.024 under \(R5\), while mean WVS
increases from 0.118 to 0.151. The gap-variability component decreases
descriptively from 0.6025 under \(B\) to 0.4859 under \(R5\), but this regime
does not support a uniform WVS-reduction claim.

Several individual instances show how these aggregate gains arise. On
\texttt{glass4}, \(R5\) reduces the maximum observed node count from
3,884,554 to 492,775, lowers WVS from 0.581 to 0.362, and increases solved
seeds from 49 to 50. On \texttt{neos-2624317-amur}, the maximum observed node
count decreases from 2,233,413 to 150,350, WVS decreases from 1.101 to 1.002,
and solved seeds increase from 46 to 49. On \texttt{supportcase34}, the maximum
observed node count decreases from 31,370 to 10,144, WVS from 0.663 to 0.477,
and solved seeds increase from 34 to 50.

The validation set also contains instances where the observed node-count tail
increases under \(R5\). The maximum observed node count rises from 41,428,151
to 48,248,381 on \texttt{gen-ip016}, from 26,829,014 to 28,380,088 on
\texttt{pigeon-13}. Additional stalled cases in which recorded node counts
decrease without a corresponding solve-completion gain are reported in
Appendix~\ref{app:validation_diagnostics}.

Overall, the held-out validation shows a regim e-dependent effect: \(R5\)
reduces WVS on baseline-tractable instances, improves solve completion in the
transition regime, and does not produce a uniform WVS reduction on stalled
instances.

\subsection{Summary of Empirical Claims}
\label{sec:empirical_claim_summary}

The empirical evidence supports three central claims. First, the controlled witness
isolates the mechanism: changing the LP state inherited by branch-and-bound
changes the fractional profile exposed to the branching rule and can change
the downstream node-count distribution.

Second, the 20-instance pilot set establishes candidate availability and the
extent to which implementable policies recover that opportunity. The
best-of-\(K\) audit shows that useful LP states are often present among the
generated candidates, while \(S\), \(L10\), and \(R5\) measure how much of
that opportunity can be recovered without access to downstream outcomes.
The diagnostic oracle serves as a retrospective availability diagnostic,
not a deployable solver policy.

Third, the held-out validation shows that the frozen policy has
regime-dependent effects. The portfolio policy \(R5\) improves aggregate solve
completion and reduces the shifted geometric means of wall-clock time and
recorded node count across the full validation set. It significantly reduces
WVS on baseline-tractable instances and increases solve completion in the
transition regime, while stalled instances do not show a uniform WVS reduction
and are strongly affected by time-limit saturation.


\section{Discussion}
\label{sec:discussion}

\subsection{What the Experiments Show}
\label{sec:discussion_results}

The experiments show that LP-equivalent states need not be search-equivalent. In degenerate MIP relaxations, alternative optimal LP states can preserve the relaxation bound exactly while exposing different fractional structures, reduced-cost profiles, and branching candidates. The state passed from continuous optimization to branch-and-bound can therefore affect the subsequent integer search even when the relaxation bound is unchanged.

Taken together, the three experiments separate mechanism, opportunity, and deployment. The witness identifies a direct path from the inherited LP state to downstream search behavior; the pilot shows that alternative states with better search outcomes can exist within the generated candidate set; and the held-out validation tests whether a frozen policy can exploit that opportunity without using downstream information. The combined evidence therefore points to LP-state steering as a conditional solver intervention: its value depends on whether alternative states differ in search-relevant structure and whether the policy can identify and exploit those differences under the available resource budget.

\subsection{Consequences for Solver Design}
\label{sec:discussion_lessons}

LP optimality and downstream search quality are different objectives. An LP
solution can be fully satisfactory as a certificate of the relaxation bound
while providing a poor state from which to continue the MIP search. Within
branch-and-bound, the returned LP state also determines the fractional
solution, reduced-cost information, branch-candidate set, and starting point
for subsequent LP reoptimization. Treating LP-equivalent states as
interchangeable can therefore discard state information that matters to the
integer search.

Candidate generation, candidate selection, and state commitment are distinct
algorithmic problems. Generation determines which alternative LP states are
exposed. Selection ranks those states using information available before
downstream search outcomes are known. Commitment determines whether a selected
state is compatible with the live LP and should replace the current state.
Separating these stages makes failure modes identifiable: useful states may
never be generated, they may be ranked poorly, or a selected state may be
unsuitable for commitment.

The results also favor selective rather than ubiquitous intervention.
Degeneracy creates the possibility of alternative states, but it does not
imply that steering should occur at every node or on every instance. Candidate
generation and evaluation incur additional LP reoptimization and selection
overhead, and an unnecessary intervention can disturb a trajectory that was
already favorable. A practical implementation should therefore steer only
when the available state information justifies the additional computation.
LP-state steering complements branching, cutting, presolve, and primal
heuristics rather than replacing them.

Solver integration imposes an additional state-consistency requirement.
Selecting a state through the LP interface is algorithmically effective only
if the solver components responsible for branching subsequently observe the
same committed state. A mismatch between the installed LP state and the state
seen by branching can eliminate the intended intervention. We do not replace
the solver's native branching rule; we control the LP state from which that
rule operates.

More broadly, degeneracy in a MIP relaxation is not only a numerical
phenomenon. When alternative states on the same optimal or near-optimal face
lead to different downstream search behavior, degeneracy creates an
algorithmic choice over the state inherited by branch-and-bound.

\subsection{Resource Accounting and Threats to Validity}
\label{sec:discussion_validity}

The most immediate qualification is the resource asymmetry of the top-five
portfolio. Modes \(B\) and \(S\) use one CPU per task, whereas \(R5\) uses six.
Accordingly, \(R5\) should not be interpreted as a one-core replacement for
the baseline solver. It evaluates whether several selected LP states can be
exploited concurrently under an explicit larger resource budget. The resource
model must therefore be considered together with the wall-clock,
solve-completion, node-count, and variability results.

Node and LP-iteration accounting also differs under portfolio execution. For
\(R5\), the reported node and LP-iteration metrics characterize the winning
child rather than aggregate work across all concurrent portfolio members.
These quantities describe the trajectory that proves optimality, not the total
computational work consumed by the portfolio. This distinction is necessary
when comparing search trajectories across the different resource models.

Selector calibration is a second source of uncertainty. The rank-based
selector and policy choices were developed on the 20-instance pilot set and
frozen before the 130-instance validation experiment. This separation prevents
the held-out instances from entering the tuning loop, but it does not imply
that the selected features, weights, or candidate budget are uniquely optimal.
The validation results therefore support the tested frozen policy rather than
all possible LP-state selectors.

LP-state validity also depends on the formulation against which the state was
generated. Added cuts can change the LP row space, while bound changes and
other node-level modifications can make a saved state stale or incompatible
with the current relaxation. The implementation therefore checks compatibility
before commitment and reoptimizes an installed state before native search
continues. A tighter integration would perform state selection after the
relevant separation and LP-modification steps for the current relaxation and
immediately before branching consumes the resulting LP state.

Finally, the intervention is inherently instance-dependent. Some relaxations
contain little useful reduced-cost degeneracy, and some generated states differ
algebraically without materially changing the branch-relevant fractional
structure. In other cases, the cost of generating or evaluating alternatives
may exceed their downstream benefit. The neutral and adverse validation cases
are therefore part of the empirical boundary of the method rather than
exceptions to be removed from the analysis.

\subsection{Limitations and Future Directions}
\label{sec:discussion_limits}

The current implementation incurs computational overhead from candidate
generation, LP reoptimization, compatibility checking, state commitment, and,
for portfolio execution, concurrent solver processes. Those costs are
justified only when they are offset by improvements in solve completion,
search effort, runtime, or variability. A production implementation should
therefore make steering cost-aware rather than applying a fixed intervention
policy indiscriminately.

Candidate generation and selection remain natural targets for improvement.
Generation could incorporate additional information about persistent branching
candidates, basis quality, cut activity, or other solver state when constructing
alternative LP states. Selection could likewise adapt to instance and search
context while retaining the requirement that decisions use only information
available before downstream outcomes are observed. The objective is not simply
to generate more candidates or improve ranking accuracy in isolation, but to
increase the probability of exposing and selecting useful states at a
computational cost justified by the downstream search.

The portfolio policy can also be made resource-adaptive. The present \(R5\)
configuration evaluates a fixed top-five state portfolio under a six-CPU
allocation. A production policy could decide when concurrent execution is
justified, how many candidate states to retain, and when to terminate
unpromising trajectories. Such decisions should account explicitly for CPU
consumption as well as wall-clock time, solve completion, search effort, and
tail behavior.

Closer solver integration offers a further direction. Solver interfaces could
provide more direct support for enumerating LP-equivalent states, preserving a
selected state through the transition to branching, and recording the state
actually observed by downstream solver components. Degeneracy-aware simplex
tie-breaking, branch-candidate-aware state selection, and state portfolios
constructed immediately before branching are natural extensions of the
framework.

The broader opportunity is to treat LP-state inheritance as an explicit part
of MIP algorithm design. The experiments show that the state returned by the
relaxation can contain search-relevant information beyond its objective value.
Better generation, selection, commitment, and resource-allocation policies can
therefore exploit a solver degree of freedom that conventional
branch-and-bound implementations largely leave implicit.

\section{Conclusion}
\label{sec:conclusion}

This paper studies MIP performance variability through LP-state inheritance. In degenerate relaxations, the LP objective does not uniquely determine the state passed to branch-and-bound. Alternative optimal LP states can preserve
the relaxation bound exactly while exposing different fractional structures, reduced-cost profiles, and branching candidates. Near-optimal states permit small objective changes controlled by the reduced-cost tolerance and perturbation size. The inherited LP state is therefore an algorithmically relevant part of the transition from continuous relaxation to integer search.

We introduced an LP-state steering framework that makes this choice explicit.
The framework generates alternative states on or near degenerate LP faces, evaluates them without altering the live solve, ranks them using information available before downstream search, and either commits a selected state or
executes a small state portfolio under an explicit resource model. The solver's native branching rule is unchanged; the intervention controls the LP state from which that rule operates.

The computational study provides evidence at three levels. The controlled witness isolates the mechanism by showing that a change in the inherited LP state can alter the fractional profile and downstream node-count distribution.
The 20-instance pilot set shows that useful alternatives are often present among generated candidates and that implementable policies can recover part of this opportunity without observing downstream outcomes. On the separate
130-instance MIPLIB 2017 validation set, the frozen \(R5\) policy increases recorded optimal completions from 3,792 to 3,894 and reduces the shifted geometric mean of recorded node count from 2,574.7 to 1,198.0. Its WVS effect is regime-dependent: the median per-instance \(R5/B\) WVS ratio is 0.910 on baseline-tractable instances, corresponding to a 9\% reduction, while transition and stalled instances exhibit different behavior.

These results identify LP-state inheritance as a concrete solver-design choice at the interface between continuous relaxation and combinatorial search. LP-state steering does not replace branching, cutting, presolve, or primal heuristics; it controls the relaxation state from which downstream search proceeds. The broader implication is that LP-equivalent states need not be search-equivalent, and that deliberate selection among them can reduce search effort and performance variability.
\clearpage

%
%
%


\begin{APPENDICES}


\section{Controlled Witness Specification}
\label{app:witness-instance}

This appendix records the algebraic specification of the controlled witness
used in Sections~\ref{sec:toy_instance} and~\ref{sec:witness_results}.
Its root LP relaxation contains a degenerate optimal face: multiple LP states attain the same root bound while exposing different fractional branching profiles to the integer search.

\subsection{Full Formulation}
\label{app:witness-formulation}

The witness instance has twelve binary variables and one continuous slack variable:
\begin{align}
\max \quad
& 10 \sum_{j=1}^{12} x_j \label{eq:app-witness-objective} \\
\text{s.t.} \quad
& \sum_{j=1}^{12} x_j + y = 6.5, \label{eq:app-witness-balance} \\
& \sum_{j=1}^{6} x_j \leq 5, \label{eq:app-witness-first-block} \\
& \sum_{j=7}^{9} x_j \leq 1.5,
\quad
\sum_{j=8}^{10} x_j \leq 1.5, \nonumber \\
& \sum_{j=9}^{11} x_j \leq 1.5,
\quad
\sum_{j=10}^{12} x_j \leq 1.5, \label{eq:app-witness-overlapping-blocks} \\
& x_7 + x_{12} \leq 1.1,
\quad
x_1 + x_7 \leq 1.2, \nonumber \\
& x_2 + x_8 \leq 1.2,
\quad
x_3 + x_9 \leq 1.2, \label{eq:app-witness-linking-constraints} \\
& 0 \leq y \leq 1,
\qquad
x_j \in \{0,1\},
\quad j=1,\dots,12 . \label{eq:app-witness-variable-bounds}
\end{align}

In the continuous relaxation, the balance constraint implies
\(\sum_j x_j=6.5-y\). Hence any feasible state with \(y=0\) satisfies
\(\sum_j x_j=6.5\) and reaches the LP objective value \(65\).
The overlapping cardinality and linking inequalities intersect this optimal
face in different ways, allowing LP states with the same bound to expose
different fractional candidates to the branching rule.

\subsection{State-Injection Audit}
\label{app:witness-state-audit}

The steered runs include an explicit state-reproduction audit.
Across the 1000 paired seeds, the root hook executes in all 1000 steered runs, the selected LP state is injected in all 1000 runs, and no injection skips are recorded. Before steering, the median number of fractional LP branching candidates at the root is 8. After injection, the median is 1 and the maximum is 2. At the branch-and-bound level, the steered mode improves 739 paired seeds, ties 261, and is never worse in node count.

This audit verifies that the intended intervention reaches the branching
interface: the root LP bound remains unchanged while the fractional profile
passed to branch-and-bound changes before the search tree is explored.

\newpage

\section{Frozen Selector Specification}
\label{app:selector-specification}

This appendix records the feature directions and weights frozen after the
20-instance pilot study and used without further tuning in the held-out
validation experiment.

\subsection{Frozen Selector Weight Vector}
\label{app:selector-weight-vector}

\begin{table}[htbp]
\TABLE
{Frozen Selector Weight Vector\label{tab:selector-weight-vector-table}}
{\footnotesize
\begin{tabular}{llr}
\hline
Feature & Direction & Weight \\
\hline
LF\_NFIX              & HI & 0.2735514148 \\
LF\_BIND              & LO & 0.1781886547 \\
LF\_RC\_ZERO\_RATIO   & LO & 0.1664482409 \\
LF\_BRANCH\_NLPCANDS  & LO & 0.1048658177 \\
LF\_RC\_ABS\_MINPOS   & HI & 0.0671931565 \\
LF\_ACTIVE\_DENSITY   & LO & 0.0518708859 \\
LF\_FRAC\_VAR         & LO & 0.0515237686 \\
LF\_REPAIR\_NCLOSE0   & LO & 0.0476371452 \\
LF\_BASIC\_COUNT      & HI & 0.0321810832 \\
LF\_FACE\_L2          & HI & 0.0265398326 \\
\hline
\end{tabular}}
{HI indicates that larger feature values are preferred; LO indicates that
smaller values are preferred. The reported weights sum to one.}
\end{table}

\subsection{Feature Interpretation}
\label{app:selector-feature-definitions}

\begin{table}[htbp]
\TABLE
{Selector Feature Interpretation\label{tab:selector-feature-definitions-table}}
{\footnotesize
\begin{tabular}{lp{0.68\textwidth}}
\hline
Feature & Interpretation \\
\hline
LF\_NFIX
& Repair signal for variables that are fractional in the base LP state. \\

LF\_BIND
& Binding-row signal in the candidate LP state. \\

LF\_RC\_ZERO\_RATIO
& Ratio of zero or near-zero reduced-cost variables. \\

LF\_BRANCH\_NLPCANDS
& Number of LP-visible branching candidates. \\

LF\_RC\_ABS\_MINPOS
& Minimum positive absolute reduced-cost signal. \\

LF\_ACTIVE\_DENSITY
& Active-row density signal. \\

LF\_FRAC\_VAR
& Fractional-variable signal. \\

LF\_REPAIR\_NCLOSE0
& Numerical repair-close-to-zero signal. \\

LF\_BASIC\_COUNT
& Basic-variable count signal. \\

LF\_FACE\_L2
& Displacement from the base state on the candidate face. \\
\hline
\end{tabular}}
{}
\end{table}

\newpage

\section{Audit and Reproducibility}
\label{app:audit-reproducibility}

\subsection{Validation Status Counts}
\label{app:validation-status-counts}

\begin{table}[htbp]
\TABLE
{Logical Validation Status Counts\label{tab:logical-validation-status-counts}}
{\scriptsize
\setlength{\tabcolsep}{5pt}
\begin{tabular}{@{}llr@{}}
\hline\up
Mode & Logical status & Count \\
\hline\up
B  & OPTIMAL   & 3,792 \\
B  & TIMELIMIT & 2,699 \\
B  & OTHER     & 1 \\
S  & OPTIMAL   & 3,812 \\
S  & TIMELIMIT & 2,661 \\
S  & OTHER     & 0 \\
R5 & OPTIMAL   & 3,894 \\
R5 & TIMELIMIT & 2,604 \\
R5 & OTHER     & 0 \down\\
\hline
\end{tabular}}
{\(R5\) counts are logical run outcomes after child-terminal and
parent-interruption reconciliation.}
\end{table}

\subsection{Remaining Missing Rows by Mode}
\label{app:remaining-missing-rows-by-mode}

\begin{table}[htbp]
\TABLE
{Validation Missing Rows by Mode\label{tab:validation-missing-rows-by-mode}}
{\scriptsize
\setlength{\tabcolsep}{5pt}
\begin{tabular}{@{}lrrrr@{}}
\hline\up
Mode
& Expected rows
& Parsed rows
& Remaining missing
& Parsed fraction \\
\hline\up
B  & 6,500 & 6,492 & 8  & 0.9988 \\
S  & 6,500 & 6,473 & 27 & 0.9958 \\
R5 & 6,500 & 6,498 & 2  & 0.9997 \down\\
\hline
\end{tabular}}
{Expected rows correspond to 130 instances and 50 seeds per mode.
Missing rows are not imputed in the validation analysis.}
\end{table}

\subsection{Software, Hardware, and Resource Accounting}
\label{app:software-hardware-resource-accounting}

\begin{table}[htbp]
\TABLE
{Computational Environment and Resource Model\label{tab:computational-environment}}
{\scriptsize
\setlength{\tabcolsep}{5pt}
\begin{tabular}{@{}ll@{}}
\hline\up
Item & Value \\
\hline\up
SCIP version/build & SCIP 9.2.1, Release \\
SoPlex version/build & SoPlex 7.1.3, Release \\
Operating system & Linux, Ubuntu \\
Cluster/hardware & Seneca HPC, \texttt{cac\_cpu} \\
Internal solver threads & 1 \\
Time limit & 7200 seconds \\
B allocated CPUs/task & 1 \\
S allocated CPUs/task & 1 \\
L10 allocated CPUs/task & 6 \\
R5 allocated CPUs/task & 6 \\
R5 process structure & Five candidate children plus one parent/controller \\
Random seed pairing & Identical instance-seed pairs across compared modes
\down\\
\hline
\end{tabular}}
{\(R5\) uses five candidate child processes and one parent/controller under
the six-CPU allocation. Reported \(R5\) node and LP-iteration summaries refer
to the winning child rather than the sum across concurrent portfolio members.}
\end{table}

\subsection{Replication Materials}
\label{app:replication-materials}

The replication package contains the solver configuration files, instance and seed manifests, run-level records, parsed analysis data, and scripts used to construct the tables and figures reported in the paper. A persistent repository link will be added once the archival package is public.

\newpage

\section{Validation Set and Result Summaries}
\label{app:full-instance-results}

This appendix provides compact summaries of the held-out validation set used in Section~\ref{sec:benchmark_results}. The paper reports family-level composition and validation outcomes rather than a full 130-instance result table.

The baseline tractability regimes are defined from observed baseline behavior
on the parsed validation runs. An instance is baseline-tractable if \(B\)
solves at least 90\% of its recorded seeds, transition if \(B\) solves more
than 10\% but less than 90\%, and stalled if \(B\) solves at most 10\%.
These regimes are used only to analyze validation behavior and are distinct
from the selection roles used when constructing the validation set.

\subsection{Validation Regimes by Family}
\label{app:validation-family-regimes}

Table~\ref{tab:validation-family-regime-counts} reports the distribution of
the 130 validation instances across instance families and baseline
tractability regimes.

\begin{table}[htbp]
\TABLE
{Validation Instances by Baseline Regime and Family
\label{tab:validation-family-regime-counts}}
{\scriptsize
\setlength{\tabcolsep}{4pt}
\begin{tabular}{lrrrr}
\hline\up
Family
& Baseline-tractable
& Transition
& Stalled
& Total \\
\hline\up
graphsym  & 7  & 3 & 4 & 14 \\
indnum    & 6  & 0 & 4 & 10 \\
marketML  & 7  & 0 & 3 & 10 \\
network   & 14 & 0 & 1 & 15 \\
neos      & 4  & 2 & 9 & 15 \\
other     & 0  & 1 & 7 & 8 \\
packprod  & 13 & 3 & 7 & 23 \\
sched     & 11 & 1 & 8 & 20 \\
setassign & 6  & 2 & 7 & 15\down\\
\hline
Total     & 68 & 12 & 50 & 130 \\
\hline
\end{tabular}}
{Regimes are defined from the observed baseline solve rate on the parsed
validation runs.}
\end{table}

\subsection{Family-Level Validation Summary}
\label{app:validation-family-summary}

Table~\ref{tab:validation-family-summary-table} summarizes validation outcomes
by family. Optimal-completion counts are aggregated over recorded validation
runs. WVS quantities are computed at the instance level and then summarized
within each family.

\begin{table}[htbp]
\TABLE
{Family-Level Validation Summary
\label{tab:validation-family-summary-table}}
{\scriptsize
\setlength{\tabcolsep}{3pt}
\begin{tabular}{lrrrrrrrr}
\hline\up
Family
& Inst.
& B opt.
& S opt.
& R5 opt.
& R5-B
& Median B WVS
& Median R5 WVS
& Median R5/B WVS \\
\hline\up
graphsym  & 14 & 412 & 416 & 427 & +15 & 0.232 & 0.241 & 1.055 \\
indnum    & 10 & 298 & 298 & 298 & +0  & 0.152 & 0.175 & 0.942 \\
marketML  & 10 & 354 & 353 & 376 & +22 & 0.136 & 0.162 & 1.116 \\
network   & 15 & 696 & 696 & 697 & +1  & 0.291 & 0.260 & 0.844 \\
neos      & 15 & 270 & 267 & 287 & +17 & 0.218 & 0.244 & 0.993 \\
other     & 8  & 29  & 29  & 36  & +7  & 0.171 & 0.192 & 1.120 \\
packprod  & 23 & 757 & 775 & 793 & +36 & 0.324 & 0.307 & 0.892 \\
sched     & 20 & 590 & 591 & 600 & +10 & 0.189 & 0.239 & 0.958 \\
setassign & 15 & 386 & 387 & 380 & -6  & 0.225 & 0.191 & 0.927\down\\
\hline
Total     & 130 & 3792 & 3812 & 3894 & +102 & - & - & - \\
\hline
\end{tabular}}
{Optimal-completion counts are aggregated over recorded validation runs within
each family. R5-B is the difference in observed optimal-run totals and is not
a paired seedwise difference. Median R5/B WVS is computed from per-instance
ratios and then summarized within family.}
\end{table}

\subsection{Selected Validation Diagnostics}
\label{app:validation_diagnostics}

Table~\ref{tab:validation-diagnostics} reports per-instance quantities for the
validation cases discussed explicitly in Section~\ref{sec:benchmark_results}.
The first three rows illustrate gains aligned across solve completion, WVS,
and recorded node count. The next two show cases where the maximum observed
node count increases under \(R5\). The final three are stalled cases in which
both modes solve no seeds; their lower recorded node counts under \(R5\)
therefore do not indicate improved solve completion.

\begin{table}[htbp]
\TABLE
{Selected Instance-Level Validation Diagnostics
\label{tab:validation-diagnostics}}
{\scriptsize
\setlength{\tabcolsep}{4pt}
\begin{tabular}{lrrrrrr}
\hline\up
Instance
& B opt.
& R5 opt.
& B WVS
& R5 WVS
& B max nodes
& R5 max nodes \\
\hline\up
\texttt{glass4}
& 49 & 50 & 0.581 & 0.362 & 3,884,554 & 492,775 \\
\texttt{neos-2624317-amur}
& 46 & 49 & 1.101 & 1.002 & 2,233,413 & 150,350 \\

\texttt{supportcase34}
& 34 & 50 & 0.663 & 0.477 & 31,370 & 10,144 \\
\texttt{gen-ip016}
& 0 & 0 & 0.078 & 0.124 & 41,428,151 & 48,248,381 \\
\texttt{pigeon-13}
& 19 & 30 & 0.427 & 0.592 & 26,829,014 & 28,380,088 \\
\texttt{markshare1}
& 0 & 0 & 0.019 & 0.167 & 38,307,612 & 1 \\
\texttt{pigeon-20}
& 0 & 0 & 0.111 & 0.184 & 11,076,719 & 1 \\
\texttt{assign1-5-8}
& 0 & 0 & 0.034 & 0.057 & 6,828,856 & 1\down\\
\hline
\end{tabular}}
{Optimal counts are observed validation-run totals for each instance and mode.
Node columns report the maximum observed recorded node count under each mode's
resource model. For \(R5\), these values do not represent the sum of nodes
processed across concurrent portfolio members.}
\end{table}

\newpage
\section{Pilot-Set Diagnostics}
\label{app:pilot-diagnostics}

\subsection{Candidate Availability Depth}
\label{app:candidate-abundance}

Section~\ref{sec:bestofk_audit} reports the best generated LP state for each
instance-seed pair. This appendix asks whether that best state is typically
isolated or whether several generated states are no worse than the baseline.

For an instance-seed pair, let \(\mathcal{C}\) denote the valid generated
candidate set. Let \(N_b\) denote the baseline branch-and-bound node count, and
let \(N_c\) denote the node count obtained from candidate \(c\in\mathcal{C}\).
We call a candidate \emph{baseline-nonworsening} if \(N_c \le N_b\), and define
\[
A=\left|\{c\in\mathcal{C}: N_c \le N_b\}\right|.
\]
Thus \(A\) is the number of generated LP states whose downstream node count is
no worse than the baseline for the same seed.

For each instance \(i\) and threshold \(r\in\{1,\ldots,10\}\), we compute the
fraction of seeds satisfying \(A\ge r\). Figure~\ref{fig:candidate-availability-depth}
reports the median of this fraction across the 20 pilot instances, together with
the 25th-75th percentile range. This appendix uses a node-count criterion only,
rather than the full lexicographic best-of-\(K\) rule from
Section~\ref{sec:bestofk_protocol}, because its purpose is to measure the depth
of candidate availability rather than to define a deployable selection policy.

\begin{figure}[htbp]
    \centering
    \includegraphics[width=0.92\textwidth]{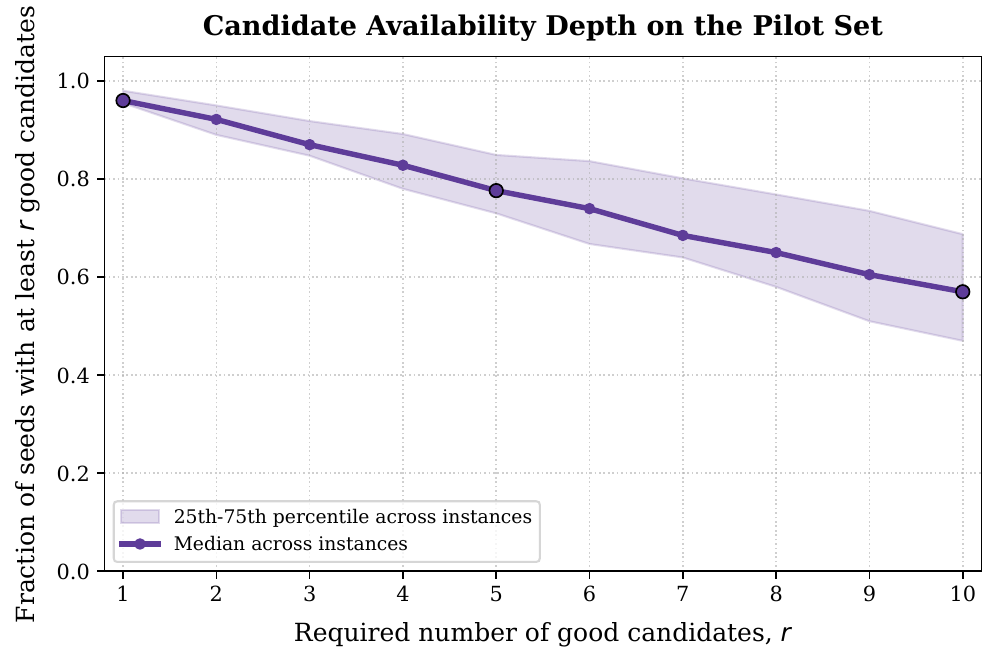}
    \caption{Candidate availability depth on the 20-instance pilot set.
    For each instance-seed pair, a generated LP state is counted as
    baseline-nonworsening if its branch-and-bound node count is no worse than
    the corresponding baseline run. For each threshold \(r\), the curve reports
    the median, across instances, of the fraction of seeds with at least \(r\)
    baseline-nonworsening candidates; the shaded band gives the 25th-75th
    percentile range across instances.}
    \label{fig:candidate-availability-depth}
\end{figure}

\subsection{LP-Iteration and Wall-Clock Profiles}
\label{app:pilot-component-profiles}

The main pilot analysis emphasizes WVS and the observed branch-and-bound
node-count tail. Figure~\ref{fig:pilot-component-profiles} reports companion
profiles for median LP iterations and median wall-clock time on the same
20-instance pilot set.

\begin{figure}[htbp]
    \centering
    \includegraphics[width=0.98\textwidth]
    {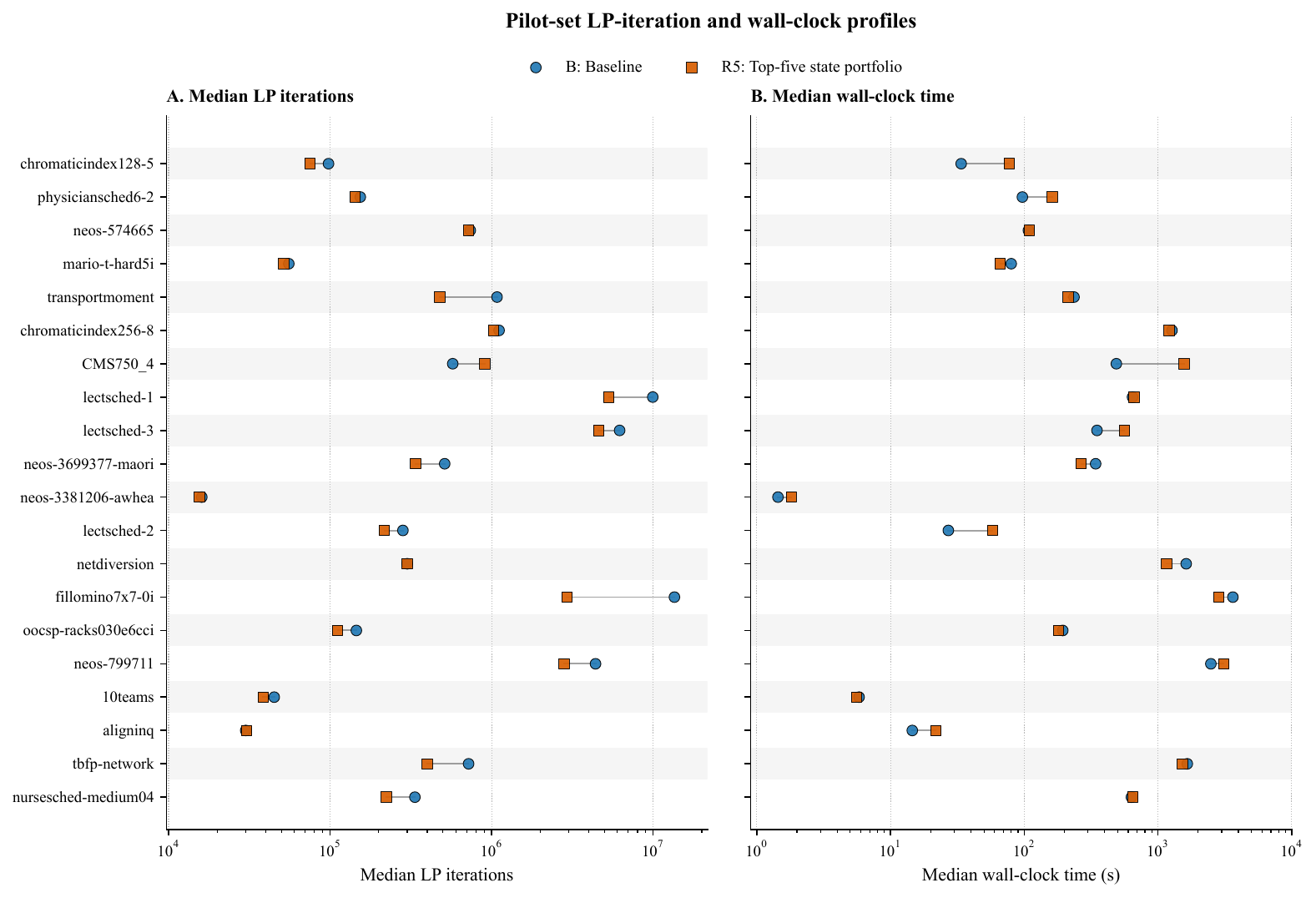}
    \caption{Pilot-set LP-iteration and wall-clock profiles for \(B\) and \(R5\). Panel A reports median LP iterations and Panel B median wall-clock time across the 100 paired seeds for each instance.}
    \label{fig:pilot-component-profiles}
\end{figure}
Instances follow the same baseline-WVS ordering as the main policy-recovery figure, and both horizontal axes use logarithmic scales. Mode \(B\) uses one CPU per task, whereas \(R5\) uses six. For \(R5\), the LP-iteration value characterizes the winning child that determines the run outcome rather than the sum of LP iterations across the concurrent portfolio.

\end{APPENDICES}

\ACKNOWLEDGMENT{We thank Professor David Shmoys and the Center for Data Science for Enterprise and Society at Cornell University for supporting access to the cloud-computing resources used in this study. We are also grateful to O.R. Advances through Collaboration (ORACL) for partial funding of this research.}







\end{document}